\newtheorem{thm}{Theorem}[section]
 \newtheorem{cor}[thm]{Corollary}
 \newtheorem{lem}[thm]{Lemma}
 \newtheorem{prop}[thm]{Proposition}
 \theoremstyle{definition}
 \theoremstyle{remark}
 \newtheorem{rem}[thm]{Remark}
\def\a{{\alpha}}
\def\b{{\beta}}
\def\g{{\gamma}}
\def\G{{\Gamma}}
\def\d{{\delta}}
\def\la{{\lambda}}
\def\La{{\Lambda}}
\def\f{{\phi}}
\def\M{{\mathfrak M}}
\def\Z{{\mathbb Z}}
\def\Q{{\mathbb Q}}
\def\R{{\mathbb R}}
\def\deg{{\rm deg}}
\def\ID2{$(\text{ID}_2)$}
\begin{document}

\title[]{Minimal Pr\"ufer-Dress rings and \\ products of idempotent matrices}

\author{Laura Cossu}

\address{Laura Cossu\\ Dipartimento di Matematica ``Tullio Levi-Civita''\\
 Via Trieste 63 - 35121 Padova, Italy}

\email{lcossu@math.unipd.it}

\author{Paolo Zanardo}

\address{Paolo Zanardo\\ Dipartimento di Matematica ``Tullio Levi-Civita''\\ Via Trieste 63 - 35121 Padova, Italy}

\email{pzanardo@math.unipd.it}

\subjclass[2010]{13F05, 15A23, 13A15}
 
\keywords{Pr\"ufer domains, minimal Dress rings, factorization of matrices, idempotent matrices.}

\thanks{Research supported by Dipartimento di Matematica ``Tullio Levi-Civita", Università di Padova under Progetto SID 2016 BIRD163492/16 “Categorical homological methods in the study of algebraic structures” and Progetto DOR1690814 “Anelli e categorie di moduli”. The first author is a member of the Gruppo Nazionale per le Strutture Algebriche, Geometriche e le loro Applicazioni (GNSAGA) of the Istituto Nazionale di Alta Matematica (INdAM)}

\begin{abstract}
We investigate a special class of Pr\"ufer domains, firstly introduced by Dress in 1965. 
The {\it minimal Dress ring} $D_K$, of a field $K$, is the smallest subring of $K$ that contains every element of the form $1/(1+x^2)$, with $x\in K$. We show that, for some choices of $K$, $D_K$ may be a valuation domain, or, more generally, a B\'ezout domain admitting a weak algorithm. Then we focus on the minimal Dress ring $D$ of $\R(X)$: we describe its elements, we prove that it is a Dedekind domain and we characterize its non-principal ideals. Moreover, we study the products of $2\times 2$ idempotent matrices over $D$, a subject of particular interest for Pr\"ufer non-B\'ezout domains.
\end{abstract}

\maketitle

\section*{Introduction}

It is well-known that the class $\mathcal{P}$ of Pr\"ufer domains is as important as large, and that several natural questions related to $\mathcal P$ are still open. (A most significant one is whether every B\'ezout domain is an elementary divisor ring; see \cite{FS} Ch. III.6.)

A nice subclass of $\mathcal P$ was discovered by Dress in the 1965 paper \cite{D}. 
Let $K$ be a field; a subring $D$ of $K$ is said to be a Pr\"ufer-Dress ring (or simply a Dress ring) if $D$ contains every element of the form ${1/(1 + x^2)}$ for  $x \in K$.

Of course, many important Pr\"ufer domains (the integers $\Z$, to name one) are not Dress rings. Nonetheless, the result in \cite{D} that any Dress ring is actually Pr\"ufer (see the first section) is quite useful. Indeed, it clearly furnishes a method for constructing relevant examples of Pr\"ufer domains (e.g., see \cite{Gilmer69}, \cite{Swan_n_gen}). 

For any assigned field $K$, we may consider the {\it minimal Dress ring} $D_K$ of $K$, namely
$
D_K = \Z_{\chi} [1/(1 + x^2) :   x \in K]
$
where $\Z_\chi$ is the prime subring of $K$.  
In this paper we mainly deal with minimal Dress rings $D_K$. In the first section we give some general results. We show that $D_K$ may coincide with $K$ (e.g., for $K = \R$), and that $D_K$ may be a valuation domain or a pull-back of a valuation domain. For $K = \R(\mathcal A)$, $\mathcal A$ a set of indeterminates, we also examine the relations between $D_K$ and the Pr\"ufer domains investigated by Sch\"ulting \cite{Schulting} (1979). Recall that Sch\"ulting was the first to prove the existence of Pr\"ufer domains $R$ containing finitely generated ideals that are neither principal nor two-generated (see also \cite{OR} for a neat discussion on this subject). As a by-product of the results in \cite{Schulting}, we get that also some minimal Dress rings admit $n$-generated ideals that are not $2$-generated.

In the second section we focus on the minimal Dress ring $D$ of $K = \R(X)$. We describe the elements of $D$, prove that it is a Dedekind domain, and characterize its non-principal ideals in terms of their generators. We also show, in Remark \ref{stable}, that $D$ is a simple example of a Dedekind domain of square stable range one, that does not have $1$ in the stable range (cf. \cite{KLW} for these notions).

Finally, in the third section we examine the products of idempotent $2 \times 2$ matrices over $D$. This kind of questions have raised considerable interest, both in the commutative and non-commutative setting. The literature dedicated to these products includes \cite{Fount}, \cite{Ruit}, \cite{SalZan}, \cite{AJLL}, \cite{FL}, \cite{CZ}. We say that $R$ satisfies property \ID2 if every $2 \times 2$ singular matrix over $R$ is a product of idempotent matrices. A natural conjecture, proposed in \cite{SalZan} and investigated in \cite{CZ}, states that, if $R$ satisfies \ID2, then $R$ is a B\'ezout domain (the converse is not true, not even for PID's; see \cite{SalZan}). Note that in \cite{CZ} it is shown that an integral domain $R$ satisfying \ID2 is necessarily Pr\"ufer. 

In this paper we prove, in Theorem \ref{fattorizzazione}, that if $p$, $q$ are elements of $D$ satisfying some conditions on their degrees and roots (recall that $p, q$ are rational functions), then the matrix $\begin{pmatrix}
p & q\\
0 & 0
\end{pmatrix}$ is a product of idempotent matrices.

The question whether $D$ does not satisfy \ID2, in accordance with the conjecture, remains open. We just observe that to verify that a Pr\"ufer domain does not satisfy \ID2 is always a challenging problem; for instance, see \cite{CZZ} and \cite{CZ} to get an idea of the difficulty. As a matter of fact, in those papers, it is shown that matrices as above are not products of idempotent matrices, for suitable choices of $p, q$.

\section{Dress rings}

Let $K$ be a (commutative) field; a subring $D$ of $K$ is said to be a Pr\"ufer-Dress ring (or simply a Dress ring) if $D$ contains every element of the form
\begin{equation} \label{d1} 
{1\over{1 + x^2}},  \qquad  x \in K.
\end{equation}
Of course, the existence of a Dress ring in $K$ implies that the polynomial $X^2 + 1 \in K[ X]$ is irreducible; in particular, if $K$ has characteristic $p \ne 0$, then $p \equiv 3$ modulo $4$. 

A Dress ring $D$ is always a Pr\"ufer domain, since every ideal of $D$ generated by two elements is invertible, a sufficient condition to be Pr\"ufer (see \cite{Gilmer}, Theorem 22.1). Indeed, for any ideal $(a, b)$ in $D$ we have $(a, b)^2 = (a^2 + b^2)D$. This easily derives from the formula, observed by Dress \cite{D},
\begin{equation} \label{d2}
{{2x}\over{1 + x^2}} = {{y^2 - z^2}\over{y^2 + z^2}}  \qquad {\rm if} \ y = x + 1, z = x - 1.
\end{equation} 
In fact, for $x = b/a$ or $x = a/b$, using (\ref{d1}) we get $a^2/(a^2 + b^2) \in D$, $b^2/(a^2 + b^2) \in D$, respectively. From (\ref{d2}) we get $ab/(a^2 + b^2) \in D$ (recall that $1/2 \in D$). It follows that $(a, b)^2 = (a^2, ab, b^2) \subseteq (a^2 + b^2)D$, and the reverse inclusion is trivial.
From the above formula it also follows that $K$ is the field of fractions of $D$.

\medskip

From now on, $K$ will always denote a field not containing square roots of $-1$. For any ring $R$, we denote by $R^*$ its multiplicative subgroup of units.

We are interested in the {\it minimal Dress ring} $D_K$ of $K$, namely
$$
D_K = \Z_{\chi} [1/(1 + x^2) :   x \in K],
$$
where $\chi$ is the characteristic of $K$ and $\Z_\chi = \Z/\chi \Z$ is the prime subring of $K$. (Recall that $\chi \not\equiv 1, 2$ modulo $4$.)

\medskip
We recall now an equivalent definition of minimal Dress rings, in terms of intersections of special valuation domains.
Given a valuation $v$ on the field $K$, we will denote by $V_v$ the corresponding valuation domain and by $\mathfrak{M}_v$ the maximal ideal of $V_v$.

 Dress \cite{D} proved that $D_K$ is the intersection
\begin{equation} \label{inters}
D_K=\bigcap_{v\in B} V_v,
\end{equation}
where $B$ is the family of the valuations over $K$ such that $\sqrt{-1}\notin V_v/\mathfrak{M}_v$. Note that $B \ne \emptyset$, since, by the standing assumption $\sqrt{-1} \notin K$, at least the trivial valuation lies in $B$.

For the sake of completeness, we verify the equality (\ref{inters}). If $v \in B$, then for every $x \in K$ we get $v(1 + x^2) \le 0$, hence $1/(1 + x^2) \in V_v$. It follows that $V_v \supset \Z_{\chi} [1/(1 + x^2) :   x \in K] = D_K$, hence $D_K \subseteq \bigcap_{v\in B} V_v$. Conversely, being a Pr\"ufer domain, $D_K$ is integrally closed, hence it is the intersection of its valuation overrings. Take any valuation overring $V$ of $D_K$, with maximal ideal $\mathfrak M$. Assume, for a contradiction, that $\sqrt{-1}\in V/\mathfrak{M}$. Then there exists $y \in V$ such that $1 + y^2 \in \mathfrak M$, so $1/(1 + y^2) \notin V$. However $1/(1 + y^2) \in D_K \subset V$, impossible. We have got the reverse inclusion $D_K \supseteq\bigcap_{v\in B} V_v$.  

\medskip

Now we examine the basic examples of minimal Dress rings. 

\begin{prop} \label{1}
Let $D = D_K$ be the minimal Dress ring in the field $K$.

(i) If $K$ is an ordered field such that every positive element is a square, then $D = K$.

(ii) If $K = \Q$, then $D = \Z_S$ where $S$ is the multiplicatively closed subset of $\Z$ generated by the primes $p \equiv 1$ modulo $4$. $D$ is an Euclidean domain that is not a valuation domain.
\end{prop}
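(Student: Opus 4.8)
The plan is to treat the two parts separately: for (ii) I will exploit the description $D_K=\bigcap_{v\in B}V_v$ recalled above, while for (i) a short direct computation inside $K$ suffices.

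For (i), note first that an ordered field has characteristic $0$, so $\Z\subseteq D$ and $1/2=1/(1+1^2)\in D$. The key point is the inclusion $(0,1)\cap K\subseteq D$: given $u\in K$ with $0<u<1$, the element $t=u/(1-u)$ is positive, hence a square, say $t=c^2$, and substituting yields $1/(1+c^2)=1/(1+t)=1-u$, so $1-u\in D$ and therefore $u=1-(1-u)\in D$. For arbitrary $a\in K$ I would then take the integer part $\lfloor a\rfloor\in\Z\subseteq D$, write $a=\lfloor a\rfloor+u$ with $u\in[0,1)\cap K$, and conclude $a\in D$ (the case $u=0$ being trivial), so $D=K$. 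The valuation description makes the mechanism transparent: if $V_v$ is a proper valuation ring of $K$ that is not convex, there is $c\in K$ with $0<c<1$ and $v(c)<0$; then $(1-c)/c>0$ is a square $f^2$ with $v(f)=0$ and $1+f^2=1/c\in\mathfrak{M}_v$, so $\sqrt{-1}=\bar f\in V_v/\mathfrak{M}_v$ and $v\notin B$. Since $K=\R$ has no proper convex subring at all, this leaves in $B$ only the trivial valuation, and $D=K$ once more.

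For (ii), the valuations of $\Q$ are the trivial one, with residue field $\Q$, and the $v_p$, with residue field $\mathbb{F}_p$; since $\sqrt{-1}\in\mathbb{F}_p$ exactly when $p=2$ or $p\equiv 1\pmod 4$, the family $B$ consists of the trivial valuation together with the $v_p$ for $p\equiv 3\pmod 4$. Hence $D=\bigcap_{p\equiv 3\,(4)}\Z_{(p)}=\Z_S$, where $S$ is the multiplicative set generated by $2$ and the primes $\equiv 1\pmod 4$ (the prime $2$ is in any case invertible in $D$, since $1/2=1/(1+1^2)$). One can also argue directly on the generators: if a prime $q\ne 2$ divides $u^2+w^2$ with $\gcd(u,w)=1$, then $(u\,w^{-1})^2\equiv -1\pmod q$, forcing $q\equiv 1\pmod 4$, which gives $D\subseteq\Z_S$; conversely, for $q\equiv 1\pmod 4$ one picks $m$ with $q\mid m^2+1$, writes $m^2+1=qk$, and observes $1/q=k\cdot\tfrac{1}{1+m^2}\in D$, giving $\Z_S\subseteq D$.

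Finally, $D=\Z_S$ is not a valuation domain because $3$ and $7$ are both $\equiv 3\pmod 4$, so $3D$ and $7D$ are distinct maximal ideals of $D$ (equivalently, neither $3/7$ nor $7/3$ lies in $D$) and $D$ is not local. To see that $D$ is Euclidean I would use $\phi(x)=|D/xD|$: for $b\ne 0$ one has $bD=ND$ with $N=\phi(b)$ an integer coprime to $S$, so reduction induces $D/bD\cong\Z/N\Z$, and choosing the representative $r\in\{0,1,\dots,N-1\}\subseteq\Z\subseteq D$ of the class of $a$ gives $a=qb+r$ with $r=0$ or $\phi(r)\le r<N=\phi(b)$, the required division. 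The step I expect to need the most care is the one flagged in (i): that every element of $K$ differs from an integer by something in $[0,1)$, equivalently that $K$ is Archimedean (as is $K=\R$) — the valuation computation shows why this cannot be dropped: a proper convex valuation ring of $K$ would lie in $B$ and obstruct $D=K$. In (ii) the only verification that is not quite routine is the Euclidean division property (as opposed to mere principality), supplied by $D/bD\cong\Z/\phi(b)\Z$.
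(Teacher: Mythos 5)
Your argument is correct, and in places more careful than the paper's own. For part (i) you follow essentially the paper's route: show $(0,1)\cap K\subseteq D$ using that positive elements are squares (the paper writes $\alpha=1/(1+x^2)$ with $x^2=1/\alpha-1$; your computation via $c^2=u/(1-u)$ is the same idea), then reach an arbitrary element through an integer --- you subtract $\lfloor a\rfloor$, the paper divides by an integer $m$ with $0<\beta/m<1$. The Archimedean caveat you raise is genuine: both reductions need it, the paper uses it tacitly (its proof even writes $x\in\R$), and your valuation remark shows why it cannot be dropped --- for a non-Archimedean ordered field in which positive elements are squares (e.g.\ a non-Archimedean real closed field), the convex hull of $\Z$ is a proper valuation ring with formally real residue field, so it belongs to $B$ and forces $D_K\subsetneq K$. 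Thus your proof, like the paper's, establishes (i) in the intended Archimedean setting.

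For part (ii) you take a partly different route. The paper argues only on generators ($1/p\in D$ forces $p$ to divide a product of sums of two coprime squares) and quotes that a localization of the Euclidean domain $\Z$ is Euclidean; you instead determine the family $B$ among the valuations of $\Q$ (using that $-1$ is a square mod $p$ iff $p=2$ or $p\equiv 1\pmod 4$) and read $D=\bigcap_{p\equiv 3\,(4)}\Z_{(p)}$ off the intersection formula, which is legitimate since that formula is established before the proposition; your generator-level argument then coincides with the paper's, and your explicit Euclidean function $\phi(b)=|D/bD|$, via $D/bD\cong\Z/N\Z$, makes the Euclidean claim self-contained rather than cited. What each approach buys: the paper's is elementary and quick, yours makes the structure of $D$ as an intersection of the $\Z_{(p)}$, $p\equiv 3\pmod 4$, transparent and gives a concrete division algorithm. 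Finally, your amended description of $S$ is the correct one: since $1/2=1/(1+1^2)\in D$, the prime $2$ is invertible in $D$, so $D=\Z_S$ with $S$ generated by $2$ together with the primes $\equiv 1\pmod 4$; the paper's formulation of $S$ (and its assertion that a prime divides a sum of two coprime squares iff it is $\equiv 1\pmod 4$) overlooks $p=2$.
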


\begin{proof}

(i) By definition, $D = \Z[1/(1 + x^2) :   x \in \R]$. Take any $\a \in K$ with $0 < \a < 1$. Then $1/\a = 1 + x^2$ for a suitable $x \in \R$, and therefore $\a \in D$. Now take any $\b \in K$, and choose an integer $m$ such that $0 < \b/m < 1$. Then $\b/m \in D$, and so $\b \in D$, as well. We conclude that $D = K$. 

\noindent
(ii) Now $D = \Z[1/(1 + x^2) :   x \in \Q] = \Z[a^2/(a^2 + b^2) :   a, b \in \Z \setminus \{ 0 \}]$. Take any prime number $p$ and assume that $1/p \in D$. Then
$$
1/p = {{m}\over{\prod_i(a_i^2 + b_i^2)}}
$$
for suitable integers $m, a_i, b_i$. It follows that $p$ divides $\prod_i(a_i^2 + b_i^2)$. As well-known, for instance by the properties of the Gaussian integers, this is possible if and only if $p$ is a sum of two squares, if and only if $p \equiv 1$ modulo $4$. Conversely, assume that the prime number $p$ is a sum of two squares, say $p = a^2 + b^2$. Then we get 
$$
1/p = {{1}\over{a^2 + b^2}} =  {{\la a^2}\over{a^2 + b^2}} +  {{\mu b^2}\over{a^2 + b^2}} \in D,
$$
for suitable integers $\la, \mu$, that exist since $a^2, b^2$ are coprime.
We easily conclude that $D = \Z_S$, as in the statement.

In particular, $D$ is Euclidean, since it is a localization of the Euclidean domain $\Z$, and is not a valuation domain, since, for instance, both $3/7$ and $7/3$ do not lie in $D$. 
\end{proof}

The following two propositions apply, respectively, when $K$ is either $\R((X))$ or $\Q((X))$, the fields of Laurent series ($X$ an indeterminate).

\begin{prop} \label{2}
Let $K$ be an Henselian field with respect to the valuation $v$, $V$ the valuation domain of $v$, $\M$ its maximal ideal. If $V/\M$ is an ordered field such that every positive element is a square, then $V$ is the minimal Dress ring in $K$.
\end{prop}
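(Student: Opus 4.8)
Write $k=V/\M$ and let $\pi\colon V\to k$ be the residue map; the goal is to show $D_K=V$. The inclusion $D_K\subseteq V$ is immediate: since $k$ is ordered we have $\sqrt{-1}\notin k$, so $v\in B$ and hence $D_K=\bigcap_{w\in B}V_w\subseteq V_v=V$ by \eqref{inters} (equivalently, $v(1+x^2)\le 0$ for every $x\in K$, so each generator $1/(1+x^2)$ of $D_K$ already lies in $V$). Everything below serves the reverse inclusion, which I would obtain from a ``residue field'' step and a ``maximal ideal'' step.

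Residue field step: for $x\in V$ one has $1+x^2\in V^*$ and $\pi\bigl(1/(1+x^2)\bigr)=1/(1+\pi(x)^2)$, while $\pi$ maps $V$ onto $k$. Hence the subring $\pi(D_K)\subseteq k$ contains $1/(1+\xi^2)$ for every $\xi\in k$, so it contains the minimal Dress ring of $k$; by Proposition~\ref{1}(i) the latter is all of $k$ (an ordered field has characteristic $0$, so $k$ is in the scope of that proposition). Thus $\pi|_{D_K}$ is onto $k$, i.e.\ $V=D_K+\M$.

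Maximal ideal step (the crux, and the only use of the Henselian hypothesis): given $b\in\M$, apply Hensel's lemma to $g(T)=bT^2-T+b\in V[T]$; its reduction $\bar g(T)=-T$ has $0$ as a simple root, so $g$ has a root $a\in V$ with $\pi(a)=0$, i.e.\ $a\in\M$. From $ba^2-a+b=0$ and $1+a^2\in V^*$ we get $b=a/(1+a^2)$, and this lies in $D_K$: by \eqref{d2} with $x=a$ the element $2a/(1+a^2)$ is a difference of two elements of the form $1/(1+y^2)$ (note $a\pm 1\in V^*$ because $a\in\M$), hence lies in $D_K$, and $1/2=1/(1+1^2)\in D_K$. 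So $\M\subseteq D_K$.

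These two steps combine at once: for $t\in V$ choose $d\in D_K$ with $\pi(d)=\pi(t)$; then $t-d\in\M\subseteq D_K$, so $t\in D_K$, giving $V\subseteq D_K$ and hence $D_K=V$. The single genuine obstacle is the production of $a\in\M$ with $b=a/(1+a^2)$, i.e.\ solving the quadratic $bT^2-T+b$ inside $\M$; this is exactly what Henselianity buys, and completeness also suffices, which is all that is needed for the intended application $K=\R((X))$ with $V=\R[[X]]$.
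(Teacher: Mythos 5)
Correct, and essentially the paper's argument: both proofs get $D_K\subseteq V$ from the fact that $\sqrt{-1}\notin V/\M$, obtain $\M\subseteq D_K$ by a Hensel's-lemma solution of a quadratic that realizes each $b\in\M$ as $z/(1+z^2)$ (the paper uses the monic $\phi=Y^2-Y+x^2$ and sets $z=x/y$, you use $bT^2-T+b$ directly), and then finish off the units/residue-field part via Proposition \ref{1}(i). The only differences are cosmetic: you shortcut the inclusion $D_K\subseteq V$ through (\ref{inters}) instead of the paper's explicit description of elements of $D_K$, and your Hensel polynomial is non-monic, which is still legitimate (Henselian local rings lift simple roots of arbitrary polynomials, or one can reduce to the monic case by extracting $\sqrt{1-4b^2}$ and taking $a=2b/(1+\sqrt{1-4b^2})$).
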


\begin{proof}
Up to isomorphism, we may assume $V/\M \subseteq \R$. Then the field $K$ has characteristic zero and $\Q \subset V$. Let $D_K$ be the minimal Dress ring in $K$. We firstly prove that $V \subseteq D_K$. We start verifying that $\M \subset D_K$. Take any $0\neq x \in \M$. We look for $z \in K$ such that 
$
x = z/(1 + z^2)
$. Consider the polynomial
$\phi = Y^2 -  Y +  x^2 \in V[Y]$, that,
modulo $\M$, has the nonzero simple root $1 + \M$. Since $V$ is Henselian, $\phi$ has a root $y \in V$. 
Setting $z = x/y$, we readily see that $z$ is the element we were looking for. Since $x$ was arbitrary, we conclude that $\M \subset D$. Now take any unit $\eta \in V^*$. Since $\Q \subset V$ and the positive elements of $V/\M$ are squares, we may pick an integer $m$ such that $\eta/m \equiv 1/(1 + z^2)$ modulo $\M$, for some $z \in V$ (see Proposition \ref{1}(i)). We conclude that $\eta \in D$, since $\M \subset D$. 

Let us verify the reverse inclusion. Since every element of $K \setminus V$ is the inverse of an element of $\M$, it suffices to show that $1/x \notin D_K$ for every $x \in \M$. We start noting that, since $V$ is a valuation domain, every element $\a = 1/(1 + z^2)$, $z \in K$ may be written either as $\a =1/(1 + f^2)$, $f \in V$, or $\a = g^2/(1 + g^2)$, $g \in V$. As a consequence, any $r \in D_K$ has the form $r = f/\prod_i(1 + g_i^2)$, for suitable $f, g_i \in V$. We now assume, for a contradiction, that
$
1/x = {f}/{\prod_i(1 + g_i^2)} \in D_K
$, for suitable $f, g_i \in V$. It follows that $\prod_i(1 + g_i^2) \equiv 0$ modulo $\M$, against the hypothesis $V/\M \subseteq \R$. 
\end{proof}

 Following \cite{SalZan}, we say that an integral domain $R$ {\it admits a weak (Euclidean) algorithm}, if for any $a, b \in R$ there exists a (finite) sequence of divisions that starts with $a, b$ and terminates with last remainder zero. Necessarily, such an $R$ is a B\'ezout domain.

\begin{prop} \label{3}
Let $K$ be an Henselian field with respect to the valuation $v$, $V$ the valuation domain of $v$, $\M$ its maximal ideal.
Let $D_K$ be the minimal Dress ring in $K$. If $V/\M \cong \Q$, then $D_K = \Z_S + \M$, where $S$ is the multiplicatively closed subset of $\Z$ generated by the primes $p \equiv 1$ modulo $4$. $D_K$ is neither a valuation domain, nor Noetherian, but admits a weak algorithm. In particular, $D_K$ is a B\'ezout domain. 
\end{prop}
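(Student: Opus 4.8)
The plan is to follow the scheme of the proof of Proposition~\ref{2}: first identify $D_K$ with the pull-back $\Z_S+\M$ of $\Z_S\subseteq\Q$ along the residue map $V\twoheadrightarrow V/\M=\Q$, and then read off the three ring-theoretic assertions from that description. For the inclusion $D_K\subseteq\Z_S+\M$: since $\sqrt{-1}\notin\Q=V/\M$, the valuation $v$ lies in the family $B$ of (\ref{inters}), so $D_K\subseteq V$; moreover $\Q\subseteq V$, because every rational prime has nonzero residue and hence is a unit of $V$. Writing $\pi\colon V\to V/\M=\Q$ for the residue map, each generator $1/(1+x^2)$ of $D_K$ lands in $\Z_S$ under $\pi$ — it maps to $1/(1+\bar x^2)\in\Z_S$ (Proposition~\ref{1}(ii)) when $x\in V$, and to $0$ when $v(x)<0$ — so $\pi(D_K)\subseteq\Z_S$, i.e. $D_K\subseteq\pi^{-1}(\Z_S)=\Z_S+\M$. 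For the reverse inclusion: $\Z\subseteq D_K$; every prime $p\equiv1$ modulo $4$ gives $1/p\in D_K$ (write $p=c^2+d^2$ and argue with the coprimality of $c^2,d^2$ exactly as in Proposition~\ref{1}(ii)); and $\M\subseteq D_K$ by the Hensel-lemma argument of Proposition~\ref{2} applied to $Y^2-Y+x^2$. Hence $\Z_S+\M\subseteq D_K$, and $D_K=\Z_S+\M$.

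The two easy assertions follow immediately. That $D_K$ is not a valuation domain: $3,7\in\Z\subseteq D_K$, while $3/7,7/3\notin D_K$, since their residues are not in $\Z_S$ (both $3$ and $7$ are $\equiv3$ modulo $4$). That $D_K$ is not Noetherian: fix $0\ne t\in\M$; then $t/3^{n}\in\M\subseteq D_K$ for every $n$, and $(t/3)\subsetneq(t/9)\subsetneq\cdots$ is a strictly ascending chain, because $(t/3^{n+1})\subseteq(t/3^{n})$ would force $1/3\in D_K$, which is false.

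The substance is the weak algorithm. Given $a,b\in D_K\subseteq V$, I produce a finite chain of divisions ending with remainder $0$. If $v(a)\ne v(b)$, at most two divisions suffice: after the harmless step $a=0\cdot b+a$ if needed, the first entry of the pair has the larger value, and dividing it by the second has quotient in $\M\subseteq D_K$ and remainder $0$. If $v(a)=v(b)$, put $u=a/b\in V^*$: when the residue $\bar u\in\Q^*$ already lies in $\Z_S$, then $u\in\Z_S+\M=D_K$ and $a=ub+0$ terminates in one step. Otherwise I run the divisions with \emph{integer} quotients: a division $x=ky+r$ with $k\in\Z$ of a pair $(x,y)$ having $v(x)=v(y)$ and residue ratio $\rho=\overline{x/y}\notin\Z$ yields a new pair $(y,r)$, again with equal values (as $\rho-k\ne0$) and now with residue ratio $1/(\rho-k)$. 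Taking $k=\lfloor\rho\rfloor$ at each stage is precisely the continued-fraction (Euclidean) algorithm applied to the rational number $\bar u$: the denominator of the residue ratio strictly decreases, so after finitely many steps the ratio becomes an integer, hence lies in $\Z_S$, and we conclude by the one-step case. Therefore $D_K$ admits a weak algorithm, and in particular is a B\'ezout domain, by the observation preceding the statement.

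The one genuinely delicate point is this last step. The naive reduction — replacing $a$ by $a-qb$ with a well-chosen $q\in D_K$ so as to raise the valuation — succeeds exactly when $\overline{a/b}\in\Z_S$ and genuinely fails otherwise; the remedy is to exploit the arithmetic of the (localized) Euclidean domain $\Z_S$ through divisions with integer quotients, and the care needed is in tracking the residue ratio and recognising its two terminal configurations (it lands in $\Z_S$, or the two valuations become unequal).
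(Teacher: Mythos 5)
Your proof is correct, and its skeleton (establish $D_K=\Z_S+\M$, then read off the three assertions) is the paper's. Two components differ in execution. For the inclusion $D_K\subseteq \Z_S+\M$ the paper writes a general element of $D_K$ as $f/\prod_i(1+g_i^2)$ with $f,g_i\in V$ and reduces modulo $\M$, using $\Q\cap\M=0$; you instead observe that the residue map sends every generator $1/(1+x^2)$ into $\Z_S$ (into the minimal Dress ring of $\Q$ when $x\in V$, to $0$ when $v(x)<0$), so $D_K\subseteq\pi^{-1}(\Z_S)=\Z_S+\M$. This pull-back argument is slightly cleaner and equally valid. The more substantial divergence is the weak algorithm: the paper simply invokes Proposition 6.4 of Salce--Zanardo, which gives a weak algorithm for $E+\M$ whenever $E$ is Euclidean, whereas you prove it from scratch by combining two mechanisms --- an exact division (quotient in $\M$) when $v(a)\ne v(b)$, and, when $v(a)=v(b)$, integer-quotient divisions that implement the continued-fraction algorithm on the residue ratio in $\Q$, whose denominator strictly decreases until the ratio lands in $\Z\subseteq\Z_S$ and one exact division finishes. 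Your bookkeeping there is sound (the new pair keeps equal values as long as $\rho-\lfloor\rho\rfloor\ne0$, and the new residue ratio is $1/(\rho-k)$), so you have in effect reproved the relevant special case of the cited result; this buys self-containedness at the cost of length, while the paper's citation buys brevity. Your non-Noetherian witness, the chain $(t/3)\subsetneq(t/9)\subsetneq\cdots$, is essentially the paper's second example (ideals generated by $\{x/p^n\}$ with $p\equiv3$ modulo $4$), and the non-valuation witness $3/7,\,7/3$ is identical.
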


\begin{proof}
Since $V/\M \cong \Q$, the field $K$ has characteristic zero and $\Q \subset V$. Since $V$ is Henselian, in the same way as in Proposition \ref{2} we may show that $\M \subset D_K$. Moreover, like in Proposition \ref{1}(ii) we see that $\Z_S \subset D_K$. It follows that $\Z_S +\M \subseteq D_K$.

Let us verify the reverse inclusion. Take any $r \in D_K$; we may assume that $r \notin \M$. Since $V/\M \cong \Q$, the element $r$ may be written as $r = a + \delta$, where $a \in \Q$ and $\delta \in \M$. Hence, to prove that $D_K \subseteq \Z_S + \M$, it suffices to verify that $a \in \Z_S$. Since $a \in D_K$, we get $a =  f/\prod_i(1 + g_i^2)$, for some $f, g_i \in V$ (cf. the proof of Proposition \ref{2}). Again using $V/\M \cong \Q$, we get $a \equiv  b/\prod_i(1 + c_i^2)$ modulo $\M$, for some $b, c_i \in \Q$, and therefore $a =  b/\prod_i(1 + c_i^2)$, since $\Q \cap \M = 0$. So $a \in \Z_S$, by Proposition \ref{1}(ii). 

Moreover, $D_K$ is not a valuation domain, since $3/7, 7/3 \in \Q \setminus D_K$, and $D_K$ is not Noetherian, since the $D_K$-ideal $\M$ is not finitely generated. To get more examples, we remark that any $D_K$-ideal generated by the set $\{ x/p^n : n > 0 \}$, where $0 \ne x \in \M$ and $p \equiv 3$ modulo $4$, is not finitely generated. Finally, in view of Proposition 6.4 of \cite{SalZan}, $D_K$ admits a weak algorithm, since $\Z_S$ is Euclidean; in particular $D_K$ is a B\'ezout domain.     
\end{proof}

We remark that all the above examples of minimal Dress rings satisfy property ID$_2$, recalled in the introduction (the last one since it admits a weak algorithm, cf. \cite{SalZan}).

\medskip

Let $\mathcal A$ be a set of indeterminates over $\R$, $K = \R(\mathcal A)$ the corresponding field of rational functions.

If $f, g \in \R[\mathcal A]$, we define the degree of $f/g$ in the natural way, namely: $\deg(f/g) = \deg(f) - \deg(g)$. We remark the following useful, readily verified property: 
if $f_1, g_1, f_2, g_2 \in \R[\mathcal A]$ are sums of squares, then $\deg(f_1/g_1 + f_2/g_2) = \sup \{ \deg(f_1/g_1) , \deg(f_2/g_2) \}$.

Recall that a field $F$ is {\it formally real} if $1 + \sum x_i^2 \ne 0$ for every choice of $x_i \in F$ (in particular $\sqrt{-1} \notin F$). A valuation $v$ on the field $K$ is said to be formally real if the residue field $V_v/\M_v$ is formally real. The existence of formally real valuations on the field $K$ implies that $K$ is formally real; in particular, $\sqrt{-1} \notin K$. 
In the 1979 paper \cite{Schulting} Sch\"ulting considered the integral domain
$$
R_K = \bigcap_{v \in C} V_v,
$$ 
where $C$ is the set of the formally real valuations on $K$. From the equality (\ref{inters}) it immediately follows that $R_K$ contains the minimal Dress ring $D_K$. In fact, if $v$ is a formally real valuation, then $\sqrt{-1} \notin V_v/\M_v$, hence $V_v \supset D_K$. So the integral domain $R_K$ is a Dress ring in our sense, hence it is a Pr\"ufer domain; it satisfies several interesting properties (see Chapter II of \cite{FHP} for a detailed description). 

The main Sch\"ulting's purpose was to exhibit examples of Pr\"ufer domains that admit finitely generated ideals that are not $2$-generated. Indeed, if $K = \R(X_1, \dots, X_n)$, it was proved that the fractional ideal $(1, X_1, \dots, X_n)$ of $R_K$ cannot be generated by less then $n+1$ elements (see \cite{Schulting} and \cite{OR}). This result is deep and difficult to prove: techniques of algebraic geometry are required. 

 If $K = \R[X_1, \dots, X_n]$ with $n \ge 2$ we conclude that also $D_K$ admits finitely generated ideals ideals that are not $2$-generated. For instance, if $n = 2$ the fractional $D_K$-ideal $(1, X_1, X_2)$ is not $2$-generated over $D_K$, otherwise the fractional $R_K$-ideal would be $2$-generated over $R_K \supset D_K$, against Sch\"ulting results in \cite{Schulting}.

\begin{prop} \label{Sch}
In the above notation, let $K = \R(X_1, \dots, X_n)$.

(i) If $n = 1$, then $R_K$ coincides with $D_K$;

(ii) if $n \ge 2$, then $R_K$ properly contains $D_K$.
\end{prop}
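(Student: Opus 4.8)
The inclusion $D_K\subseteq R_K$ is already recorded above, so in both parts the work lies in the reverse comparison, and the plan is to argue entirely at the level of the valuation families $B$ and $C$. The guiding observation is that $\R\subseteq D_K$: indeed $1/(1+x^2)\in D_K$ already for every $x\in\R$, whence $D_K\supseteq\Z[1/(1+x^2):x\in\R]=\R$ by Proposition \ref{1}(i). Consequently every $v\in B$ satisfies $\R\subseteq D_K\subseteq V_v$ and is therefore trivial on $\R$; a fortiori the same is true for every $v\in C$, since $C\subseteq B$.

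For part (i), with $K=\R(X)$, I would reduce to showing that $B=C$, for then $D_K=\bigcap_{v\in B}V_v=\bigcap_{v\in C}V_v=R_K$. Take $v\in B$. Being trivial on $\R$, $v$ is, up to equivalence, one of the classically listed valuations of $\R(X)/\R$: the trivial valuation; the $(X-a)$-adic valuation for some $a\in\R$, with residue field $\R$; the $p$-adic valuation for some monic irreducible quadratic $p\in\R[X]$, with residue field $\cong\C$; or the valuation at infinity, with residue field $\R$ (this is seen by inspecting the centre of $v$ on $\mathbb{P}^1_{\R}$, or directly from the fact that $\R[X]$ is a PID). The condition $\sqrt{-1}\notin V_v/\M_v$ excludes precisely the quadratic case, and in every remaining case the residue field --- namely $\R$, or $\R(X)$ for the trivial valuation --- is formally real, so $v\in C$. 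Thus $B\subseteq C$, hence $B=C$ and $D_K=R_K$.

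For part (ii), with $n\ge 2$, I would exhibit the single witness
$$
r\ =\ \frac{1}{X_1^2+X_2^2+1}\ \in\ K=\R(X_1,\dots,X_n),
$$
and show that $r\in R_K\setminus D_K$. Write $\ell=\R(X_3,\dots,X_n)$, so $\ell=\R$ when $n=2$. Since $\ell$ is formally real, $-(X_1^2+1)$ is not a square in $\ell(X_1)$, so $X_1^2+X_2^2+1$ is irreducible in the UFD $\ell[X_1,X_2]$; hence $V_v:=\ell[X_1,X_2]_{(X_1^2+X_2^2+1)}$ is a discrete valuation ring of $K$ with $v(r)=-1$ and residue field $F=\operatorname{Frac}\big(\ell[X_1,X_2]/(X_1^2+X_2^2+1)\big)$, the function field over $\ell$ of the conic $X_1^2+X_2^2+1=0$. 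I would then check that $\sqrt{-1}\notin F$: otherwise $F$, being a quadratic extension of $\ell(X_1)$ that contains $i:=\sqrt{-1}$, would coincide with $\ell(i)(X_1)$, and then $X_2^2=-(X_1-i)(X_1+i)$ would be a square in the rational function field $\ell(i)(X_1)$ --- impossible, since $X_1-i$ occurs in it with multiplicity $1$. Therefore $v\in B$, so $D_K\subseteq V_v$ by (\ref{inters}), and $r\notin V_v$ yields $r\notin D_K$.

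It then remains to verify that $r\in R_K$, which I expect to be the main obstacle. I would show that $w(X_1^2+X_2^2+1)\le 0$ for every formally real valuation $w$ on $K$. Assuming $w(X_1^2+X_2^2+1)>0$, the equality $w(1)=0$ forces $w(X_1^2+X_2^2)=0$. If $w(X_1),w(X_2)\ge 0$, then reduction modulo $\M_w$ gives $\bar X_1^2+\bar X_2^2+1=0$ in $V_w/\M_w$, contradicting formal reality. Otherwise, say $w(X_1)<0$; then $w(X_2^2)=w(X_1^2)$ (otherwise $w(X_1^2+X_2^2)=\min\{w(X_1^2),w(X_2^2)\}<0$, contrary to the above), so $w(X_2/X_1)=0$, and $1+(X_2/X_1)^2=(X_1^2+X_2^2)/X_1^2$ has $w$-value $-w(X_1^2)>0$, so $\overline{X_2/X_1}$ is a square root of $-1$ in $V_w/\M_w$ --- again impossible. (Equivalently, $1/(1+\sigma)$ lies in the real holomorphy ring $R_K$ for every sum of squares $\sigma$, since $1+\sigma$ is totally positive; one may quote this in place of the case analysis.) Together with the lemma $\sqrt{-1}\notin F$ --- the other delicate point, since it is precisely what makes the divisorial valuation along the conic a legitimate member of $B$ --- this would give $r\in R_K\setminus D_K$, and hence $D_K\subsetneq R_K$.
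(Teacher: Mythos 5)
Your proof is correct, and although it shares the paper's overall skeleton --- both parts rest on the intersection formula (\ref{inters}), and part (ii) on a divisorial valuation attached to the quadric $1+\sum X_i^2$ --- the key arguments are carried out differently. For (i), the paper shows directly that every valuation overring of $D_K$ has formally real residue field: writing $1+\sum r_i^2\in\M$ with $r_i=f_i/g$, it factors the positive definite polynomial $g^2+\sum f_i^2$ as $\alpha\prod_j(1+s_j^2)$ with $s_j$ linear, treating the case $v(X)<0$ separately via $v=-\deg$; you instead classify the places of $\R(X)$ trivial on $\R$ (after noting $\R\subset D_K$) and verify $B=C$ place by place. The paper's route avoids the classification of places, yours makes the family $B$ completely explicit; both are sound. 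For (ii), the paper takes $f=1+X_1^2+\cdots+X_n^2$ in all $n$ variables and excludes $\sqrt{-1}$ from the residue field by a coprimality computation after factoring $a^2+b^2=(a+ib)(a-ib)$ in $\mathbb{C}[X_1,\dots,X_n]$, using irreducibility of $f$ over $\mathbb{C}$ (which requires $n\ge 2$), whereas you work with the two-variable conic and a quadratic-extension argument inside $\ell(i)(X_1)$; these are equivalent in substance. The one point where you add genuine content is the explicit check that $1/(1+X_1^2+X_2^2)\in R_K$, i.e. that $w(1+X_1^2+X_2^2)\le 0$ for every formally real $w$: the paper instead asserts that a non-formally-real valuation ring cannot contain $R_K$, leaving this standard property of the real holomorphy ring implicit, so on that step your write-up is the more self-contained of the two.
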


\begin{proof}
(i) Let $K = \R(X)$. By (\ref{inters}) $R_K = D_K$ if and only if for every valuation overring $V$ of $D_K$, the residue field $V/\M$ is formally real. Assume, for a contradiction, that there exist $r_1, \dots, r_m \in V \setminus \M $ such that 
$1 + \sum_{i = 1}^m r_i^2 \in \M$.
We may write $r_i = f_i/g$, where $f_i, g \in \R[X]$. Let $v$ be the valuation determined by $V$; recall that $\sqrt{-1} \notin V/\M$. We firstly consider the case where $v(X) \ge 0$, hence $v(g) \ge 0$, and the above equality yields 
$\f = g^2 +  \sum_{i = 1}^m f_i^2 \in \M$.
Factorizing $\f$ in $\R[X]$, we may write $
\f = \a \prod_j (1 + s_j^2) \in \M$,
for suitable $\a > 0$ and linear polynomials $s_j$. We reached a contradiction, since $\sqrt{-1} \notin V/\M$ implies that  $1 + s_j^2 \notin \M$, for all $j$.

It remains to examine the special case where $v(X) < 0$. Under the present circumstances $v = - \deg$, hence $f_i/g \in V \setminus \M$ implies $\deg \, f_i = \deg \, g$ and $  \deg \, \f = 2 \, \deg \, g$. Therefore $v(\f/g^2) = - \deg(\f/g^2) = 0$, so $1 + \sum_{i = 1}^m r_i^2 \notin \M$, another contradiction.

(ii) Let $K = \R(X_1, \dots, X_n)$ with $n \ge 2$. The Sch\"ulting ring $R_K$ properly contains $D_K$ if there exists a valuation overring $V$ of $D_K$ such that $V/\M$ is not formally real. Let us consider the irreducible polynomial $
f = 1 + X_1^2 + \dots + X_n^2 \in \R[X_1, \dots, X_n]$.
 Let $w$ be the rank-one discrete valuation determined by $f$, i.e., the valuation that extends the assignments $w(f) = 1$ and $w(g) = 0$ if $f$ does not divide $g \in \R[X_1, \dots, X_n]$. By the definition, $w$ is not a formally real valuation, hence the valuation domain $V$ determined by $w$ does not contain  $R_K$. To get our conclusion, it suffices to prove that $V$ is a valuation overring of $D_K$. Assume, for a contradiction, that $\sqrt{-1} \in V/\M$, say $a^2/b^2 + 1 \in \M$, with $a, b \in \R[X_1, \dots, X_n]$, $b$ coprime with $f$. Then there exist $c, d \in K[X_1, \dots, X_n]$, $d$ coprime with $f$, such that $d(a^2 + b^2) = f c b^2$.
By coprimality we get $a^2 + b^2 = f e$, for some $e \in \R[X_1, \dots, X_n]$. Factorizing the last equality in $\mathbb C[X_1, \dots, X_n]$, we derive that $f$ divides $a + ib$ (and $a - i b$, as well) in $\mathbb C[X_1, \dots, X_n]$, since $f$ is irreducible in $\mathbb C[X_1, \dots, X_n]$. Say $a + i b = f(g_1 + i g_2)$, with $g_1, g_2 \in \R[X_1, \dots, X_n]$. It follows that $b = f g_2$, hence $b$ is not coprime with $f$, impossible.
\end{proof}

\section{The minimal Dress ring of $\R(X)$}

In this section we focus on the minimal Dress rings $D$ of the field of rational functions $\R(X)$. Our aim is to give a complete description of the elements of this ring. We will also prove that $D$ is a Dedekind domain, i.e. a Noetherian Pr\"ufer domain, and we characterize the non-principal ideals of $D$.

Let $\G = \{\alpha\prod_i \g_i \}$, where the $\g_i$ are monic degree-two polynomials irreducible over $\R[X]$ and $0 \ne \alpha$ is a real number. Of course, $\G$ coincides with the set of the polynomials in $\R[X]$ that have no roots in $\R$. In what follows we also use the notation $\G^+ = \{f \in \R[X] : f(r) > 0 , \forall \, r \in \R \}$, and, correspondingly, $\G^- =  \{- f  : f \in \G^+ \}$.

\medskip

It is easy to characterize the elements of $D$. 

\begin{prop} \label{charac}
Let $D$ be the minimal Dress ring of $\R[X]$. Then
$$
D = \{f/\g : f \in \R[X], \g\in \G , \deg \, f \le \deg \, \g \}.
$$
\end{prop}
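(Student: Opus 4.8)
Write $D'$ for the ring on the right-hand side; the plan is to prove $D\subseteq D'$ and $D'\subseteq D$ separately. For the first inclusion I would first check that $D'$ really is a subring of $\R(X)$: it contains $0=0/1$ and $1=1/1$, and closure under the ring operations is a short degree computation — put two fractions over the common denominator $\g_1\g_2\in\G$ and use that $\deg$ is additive on products and subadditive on sums, so that $\deg(f_1\g_2\pm f_2\g_1)\le\deg(\g_1\g_2)$ and $\deg(f_1f_2)\le\deg(\g_1\g_2)$. Since $D$ is by definition the smallest subring of $\R(X)$ containing all the elements $1/(1+x^2)$, it then suffices to show these lie in $D'$: writing $x=g/h$ with $g,h\in\R[X]$ coprime and $h\neq0$, one has $1/(1+x^2)=h^2/(g^2+h^2)$, where $g^2+h^2$ has no real root (coprimality forbids a common real zero of $g$ and $h$) and hence lies in $\G$, while comparison of leading coefficients gives $\deg(h^2)\le\deg(g^2+h^2)$. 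Thus $1/(1+x^2)\in D'$ and so $D\subseteq D'$.

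For the reverse inclusion I would first assemble two membership facts. One: $\R\subseteq D$, because $D$ contains $\Z[1/(1+x^2):x\in\R]=D_{\R}$, which equals $\R$ by Proposition~\ref{1}(i). Two: $u/(1+u^2)\in D$ for every $u\in\R(X)$; this is exactly the computation made right after~(\ref{d2}) — for $u\ne\pm1$ the element $2u/(1+u^2)$ is the difference of $(u+1)^2/\big((u+1)^2+(u-1)^2\big)$ and $(u-1)^2/\big((u+1)^2+(u-1)^2\big)$, each of the form $1/(1+w^2)$, and $1/2\in\R\subseteq D$; the cases $u=\pm1$ are trivial.

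Now take $f/\g\in D'$ and write $\g=\a\prod_j\g_j^{e_j}$ with $\a\in\R^*$ and the $\g_j$ distinct monic irreducible quadratics. Because $\a^{-1}\in\R\subseteq D$, it is enough to handle $f/\tilde\g$ with $\tilde\g=\prod_j\g_j^{e_j}$ monic. Euclidean division by $\tilde\g$ (which has degree $\ge\deg f$) lets me assume $\deg f<\deg\tilde\g$, the quotient being a real constant, which is in $D$. Then the partial-fraction expansion over $\R$ writes $f/\tilde\g$ as a sum of terms $(c_{jk}X+d_{jk})/\g_j^k$ with $c_{jk},d_{jk}\in\R$ and $1\le k\le e_j$. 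Writing $\g_j=(X-a_j)^2+b_j^2$ with $b_j\ne0$ and $u_j=(X-a_j)/b_j$, one gets $1/\g_j=b_j^{-2}\cdot 1/(1+u_j^2)\in D$ and $(X-a_j)/\g_j=b_j^{-1}\cdot u_j/(1+u_j^2)\in D$; hence $(c_{jk}X+d_{jk})/\g_j=c_{jk}\,(X-a_j)/\g_j+(c_{jk}a_j+d_{jk})/\g_j\in D$, and therefore $(c_{jk}X+d_{jk})/\g_j^k=\big((c_{jk}X+d_{jk})/\g_j\big)\,(1/\g_j)^{k-1}\in D$. Summing over $j,k$ gives $f/\tilde\g\in D$, hence $f/\g\in D$, so $D'\subseteq D$.

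The degree bookkeeping in the first step and the partial-fraction expansion are routine. The one place that needs a genuine idea is the reduction, via partial fractions, to numerators of degree at most one over a single quadratic factor: a crude ``clear the denominators'' argument would require $X\in D$, which is false, so the decomposition is essential. The only remaining input is the membership $u/(1+u^2)\in D$, and that is the single point where Dress's identity~(\ref{d2}) is used. I therefore expect no serious obstacle, only the need to route the argument through partial fractions rather than through polynomial denominators directly.
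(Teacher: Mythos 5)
Your proof is correct. The forward inclusion is handled just as in the paper (generators $1/(1+x^2)=h^2/(g^2+h^2)$ have degree $\le 0$ and denominator without real roots, and the degree/no-real-root conditions survive sums and products), and the reverse inclusion rests on exactly the same two membership facts the paper uses, namely $\R\subseteq D$ and $u/(1+u^2)\in D$ via Dress's identity (\ref{d2}). Where you diverge is in how you decompose a general element $f/\g$ with $\deg f\le\deg\g$: you run Euclidean division plus a real partial-fraction expansion, reducing to terms $(cX+d)/\g_j^{\,k}$ with $\g_j$ an irreducible quadratic, whereas the paper reduces by linearity to the monomial case $X^m/\g$ with $0\le m\le\deg\g=2n$, factors $\g=\a\prod_{i=1}^n\bigl(1+(r_iX-s_i)^2\bigr)$, and distributes the power $X^m$ among the $n$ factors, each factor absorbing $X^0$, $X^1$ or $X^2$ via $1/(1+u^2),\,u/(1+u^2),\,u^2/(1+u^2)\in D$. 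Both routes are elementary and of comparable length; yours has the small advantage of quoting a standard decomposition, while the paper's avoids partial fractions altogether. In particular your closing remark that the partial-fraction expansion is ``essential'' is not accurate: the obstruction you point out (that $X\notin D$) is circumvented in the paper not by partial fractions but by the observation that the numerator's degree never exceeds the total degree of the factored denominator, so the monomial can be split factor by factor.
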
 

\begin{proof}
Since $\R \subset \R(X)$, we get $\R \subset D$, and therefore $D = \R[a^2/(a^2 + b^2)]$, where $a, b \in \R[X]$. Say $f/\g \in D$, for coprime polynomials $f, \g \in \R[X]$. It is then clear that $\g \in \G$, since in $\R[X]$ a sum of two squares lies in $\G$. Since the generators $a^2/(a^2 + b^2)$ have degree $\le 0$, it readily follows that $\deg(f/\g) \le 0$. Hence $D \subseteq \{f/\g : \g\in \G , \deg \, f \le \deg \, \g \}$. To verify the reverse inclusion, it suffices to show that, for any assigned $\g \in \G$ of degree $2n$, say, the rational function $X^m/\g$ lies in $D$, for $0 \le m \le 2n$. We can write $ \g = \a \prod_{i = 1}^n(1 + (r_iX - s_i)^2)$, for suitable real numbers $\a \ne 0,  r_i \ne 0, s_i$. Then $1/(1 + (r_iX - s_i)^2)$,  $(r_i X - s_i)/(1 + (r_iX - s_i)^2)$ and $(r_i X - s_i)^2/(1 + (r_iX - s_i)^2)$ lie in $D$, hence easy computations show that $X/(1 + (r_iX - s_i)^2) \in D$, $X^2/(1 + (r_iX - s_i)^2) \in D$.  We readily conclude that $X^m/\g \in D$. 
\end{proof}

From the preceding characterization, we immediately see that $u \in D^*$ if and only if $u = \g_1/\g_2$, where $\g_1, \g_2 \in \G$ and $\deg \, \g_1 = \deg \, \g_2$. Moreover, any $r \in D$ is associated to an element of the form $f/\g$, where $f \in \R[X]$ is a product of linear factors.

\begin{prop} \label{square}
Let $D$ be the minimal Dress ring of $\R(X)$. Let $J = (r_i : i \in \Lambda)$ be an ideal of $D$. Then $J^2 = (r_i^2 : i \in \Lambda)$. 
\end{prop}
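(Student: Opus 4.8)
The plan is to reduce everything to the two-generated case, which is already essentially contained in Section 1. Recall that in any Dress ring, in particular in $D$, one has $(a,b)^2 = (a^2+b^2)D$ for all $a,b$, by the identities (\ref{d1}) and (\ref{d2}). I would first note that this at once upgrades to $(a,b)^2 = (a^2, b^2)$: indeed $a^2+b^2 \in (a^2, b^2) \subseteq (a^2, ab, b^2) = (a,b)^2 = (a^2+b^2)D$, and since the two extreme terms of this chain coincide, all the inclusions are equalities. So the two-generated instance of the proposition comes for free.

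Next I would pass to an arbitrary family. Write $J = (r_i : i \in \Lambda)$. The key elementary remark is that $J^2$, as a $D$-ideal, is generated by the pairwise products $\{ r_i r_j : i, j \in \Lambda \}$: every element of $J$ is a finite $D$-linear combination of the $r_i$, and every element of $J^2$ is a finite sum of products of two elements of $J$, so expanding by bilinearity each such element lies in $(r_i r_j : i, j \in \Lambda)$; the reverse containment is trivial. Now apply the two-generated case to the ideal $(r_i, r_j)$: it gives $r_i r_j \in (r_i, r_j)^2 = (r_i^2, r_j^2) \subseteq (r_k^2 : k \in \Lambda)$. Hence $J^2 \subseteq (r_k^2 : k \in \Lambda)$, while $(r_k^2 : k \in \Lambda) \subseteq J^2$ is immediate since each $r_k^2 \in J^2$. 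This yields $J^2 = (r_i^2 : i \in \Lambda)$.

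There is no real obstacle here; the only point to be careful about is the reduction of $J^2$ to products of the chosen generators rather than all products $xy$ with $x,y \in J$, but this is just bilinearity of multiplication. It is perhaps worth remarking (though irrelevant to the statement) that nothing special about $\R(X)$ is used: the conclusion holds verbatim in every Dress ring, being really a statement about any ring in which $(a,b)^2 = (a^2+b^2)D$ for all $a,b$.
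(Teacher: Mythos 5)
Your proof is correct and follows essentially the same route as the paper: both reduce to a typical generator $r_i r_j$ of $J^2$ and invoke the Dress-ring identity $(r_i,r_j)^2=(r_i^2+r_j^2)D$ to place it inside $(r_k^2 : k\in\Lambda)$, the reverse inclusion being trivial. Your intermediate observation that $(a,b)^2=(a^2,b^2)$ and the remark that the statement holds in any Dress ring are accurate but just make explicit what the paper's one-line argument already uses.
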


\begin{proof}
It is clear that $J^2 \supseteq (r_i^2 : i \in \Lambda)$. Conversely, take a typical generator $r_j r_k$ of $J^2$. We know that $r_j r_k \in (r_j, r_k)^2 =  (r_j^2 + r_k^2)D \subseteq (r_i^2 : i \in \Lambda)$. We conclude that $(r_i^2 : i \in \Lambda) \supseteq J^2$.
\end{proof}

\begin{thm} \label{Dedekind}
The minimal Dress ring $D$ of $\R(X)$ is a Dedekind domain.
\end{thm}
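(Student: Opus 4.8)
The plan is to show that $D$ is a Noetherian Pr\"ufer domain; since $D$ is already known to be Pr\"ufer (being a Dress ring), the entire burden is to establish that $D$ is Noetherian. I would do this by proving that every ideal of $D$ is finitely generated, in fact generated by at most two elements, working from the explicit description in Proposition \ref{charac}.

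First I would reduce to ideals of a convenient shape. By the characterization of $D^*$ following Proposition \ref{charac}, every nonzero $r \in D$ is associated to an element of the form $f/\g$ with $f \in \R[X]$ a product of linear factors and $\g \in \G$; multiplying through by a suitable unit $\g'/\g''$ one can even arrange the denominators. So given a nonzero ideal $J$, after clearing a common denominator $\g_0 \in \G$ (which is a unit multiple away from being harmless since $\G$-elements are units in $D$), one sees that $J$ corresponds to an ideal of the form $\{ f/\g_0 : f \in I \}$ where $I$ is a set of \emph{polynomials} in $\R[X]$, each of degree $\le \deg \g_0$. The key point is that the relevant invariant of such a polynomial $f$, from the point of view of $D$, is only its set of real roots with multiplicities: the ``$\G$-part'' of $f$ is a unit in $D$. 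So to each nonzero $r = f/\g \in D$ I would attach the monic polynomial $\rho(r) \in \R[X]$ which is the product of the real linear factors of $f$ (with multiplicity); then $r_1 \mid r_2$ in $D$ precisely when $\rho(r_1) \mid \rho(r_2)$ in $\R[X]$ \emph{and} $\deg r_1 \ge \deg r_2$ in the sense that the degree does not obstruct membership in $D$ — this divisibility bookkeeping is the technical heart and needs to be set up carefully using Proposition \ref{charac}.

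Next, with this dictionary in hand, I would argue that any ideal $J = (r_i : i \in \Lambda)$ is finitely generated. Consider the monic polynomials $\rho(r_i) \in \R[X]$; since $\R[X]$ is a PID, their gcd $d$ is a finite $\R[X]$-combination of finitely many of them, so among the $r_i$ there are finitely many, say $r_{i_1}, \dots, r_{i_k}$, with $\gcd(\rho(r_{i_1}), \dots, \rho(r_{i_k})) = d$. I claim $J = (r_{i_1}, \dots, r_{i_k})$. Indeed, the real-root datum of any $r_i$ is divisible by $d$, and since $D$ is Pr\"ufer one has $(r_{i_1}, \dots, r_{i_k})$ invertible and can compute $(r_{i_1}, \dots, r_{i_k})^2 = (r_{i_1}^2 + \cdots + r_{i_k}^2)D$ (using Proposition \ref{square} together with the Dress identity $(a,b)^2 = (a^2+b^2)$); this lets one pin down $(r_{i_1}, \dots, r_{i_k})$ as the ideal of all elements of $D$ whose real-root datum is divisible by $d$, and every $r_i$ lies there. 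Hence $J$ is finitely generated. A Noetherian Pr\"ufer domain of dimension one that is not a field is exactly a Dedekind domain, and $D$ is not a field (e.g.\ $1/(1+X^2)$ is a non-unit), so we conclude.

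I expect the main obstacle to be the divisibility dictionary in the second paragraph: making precise, and verifying, that divisibility in $D$ is governed \emph{jointly} by the real-root data and by the degree constraint $\deg f \le \deg \g$, and that this constraint never actually obstructs the finite-generation argument because one is always free to adjust by unit factors from $\G$ to fix degrees. Once that is cleanly stated, the reduction to the PID $\R[X]$ and the invertible-ideal computation are routine. An alternative, possibly slicker route would be to observe that $D$ is an overring of the PID $\R[X]$ obtained by inverting all of $\G$, i.e.\ $D$ sits between $\R[X]$ and $\R(X)$ and is in fact a localization-type overring of $\R[X]$ at a multiplicative set together with a degree condition; overrings of Dedekind (indeed of Noetherian Pr\"ufer) domains are again Dedekind, so if one can exhibit $D$ cleanly as such an overring the theorem is immediate — but identifying $D$ in those terms still requires essentially the same analysis of Proposition \ref{charac}, so I would present the direct argument above.
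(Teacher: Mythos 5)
There is a genuine gap at the decisive step. Your dictionary for divisibility in $D$ correctly involves \emph{two} kinds of data: the real-root data $\rho(r)$ (the valuations of $\R(X)$ at real points) \emph{and} the degree (the valuation at infinity, since $D=V_\infty\cap\bigcap_{a\in\R}V_a$). But when you pass to the finite subset $r_{i_1},\dots,r_{i_k}$ you choose it only so that $\gcd(\rho(r_{i_1}),\dots,\rho(r_{i_k}))=d$, and you then identify $(r_{i_1},\dots,r_{i_k})$ with ``the ideal of all elements of $D$ whose real-root datum is divisible by $d$''. That identification is false, and the containment $J\subseteq(r_{i_1},\dots,r_{i_k})$ can fail, precisely because of the degree constraint you flagged but did not incorporate: every element of $(r_{i_1},\dots,r_{i_k})$ has degree $\le\max_j\deg r_{i_j}$, since multiplying by elements of $D$ cannot raise the degree. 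Concretely, take $J=\bigl(1/(1+X^2)^2,\,1/(1+X^2)\bigr)$: all real-root data are trivial, so $d=1$ and your procedure may select the single generator $1/(1+X^2)^2$; yet $1/(1+X^2)\notin\bigl(1/(1+X^2)^2\bigr)$, because the quotient $1+X^2$ has positive degree and so is not in $D$. The repair is to enlarge the finite subset by a generator of maximal degree (possible since all degrees are integers $\le 0$) and then check membership valuation by valuation, including $v_\infty$; this extra degree-maximization is exactly the point the paper takes care of, in its own route, by proving instead that $J^2$ is principal, generated by a finite sum of squares $s=\sum_{j\in A}r_j^2$ chosen both so that the numerators are coprime \emph{and} so that $s$ has maximal degree (sums of squares add degrees by taking the supremum, which makes the degree bookkeeping painless).

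Two smaller points. Your appeal to $(r_{i_1},\dots,r_{i_k})^2=(r_{i_1}^2+\cdots+r_{i_k}^2)D$ is fine (it is Proposition \ref{square}), but as written it does not actually ``pin down'' the ideal $(r_{i_1},\dots,r_{i_k})$; the clean way to finish after the repair above is the local characterization of membership in a finitely generated ideal of a Pr\"ufer domain, or the paper's device of working with $J^2$ and invertibility. Finally, the proposed ``slicker route'' starts from a false premise: $D$ does \emph{not} sit between $\R[X]$ and $\R(X)$, since $X\notin D$ (all elements of $D$ have degree $\le 0$), so $D$ is not an overring of $\R[X]$ and that shortcut does not apply as stated.
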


\begin{proof}
Since $D$ is a Pr\"ufer domain, in order to verify that it is actually a Dedekind domain, it suffices to show that $D$ is Noetherian.

Let $J \ne 0$ be an ideal of $D$. We will prove that $J^2$ is a principal ideal, so $J$ is invertible, and therefore finitely generated. 

Say $J = (r_i = f_i/\g_i : i < \Lambda)$, where $f_i \in \R[X]$, $\g_i \in \G$, $\deg(r_i) \le 0$. Possibly replacing $J$ with an isomorphic ideal, we may safely assume that the $f_i$ are coprime, i.e., there exists a finite subset $A$ of $\La$ such that $1 = {\rm gcd}(f_j : j \in A)$. By Proposition \ref{square} we know that $J^2 = (r_i^2: i \in \La)$. Let $s = \sum_{j \in A} r_j^2$. Since $\deg(r_i) \le 0$ for every $i \in \La$, we may choose $s$ of maximal degree. Let us verify that $J^2 = s D$. It suffices to show that $r_m^2 = (f_m/\g_m)^2 \in s D$ for every generator $r_m^2$ of $J^2$. Say $s = g/\g^2$, where $\g = \prod_{j \in A}\g_j$ and $g = \sum_{j \in A} f_j^2 \g^2/\g_j^2$. Note that $g$ is a sum of squares and has no roots in $\R$, since the $f_j$ have no common root in $\R$. 
We get
$$ 
(f_m/\g_m)^2 / s = {{ f_m^2 \g^2}\over{g \g_m^2}} \in D.
$$
In fact $g \g_m^2 \in \G$, since $g$ has no roots in $\R$. Moreover $\deg(f_m^2/\g_m^2) \le \deg s$, otherwise the choice $s_1 = s + (f_m/\g_m)^2$ would contradict the assumption that $s$ has maximal degree. (Recall that $\deg(s + r_m^2) = \sup \{ \deg(s) , \deg(r_m^2) \}$, since $s$ is a sum of squares.)
\end{proof}

Since $D$ is a Dedekind domain, all its ideals are generated by two elements. The next proposition gives a characterization of the ideals over $D$ that are exactly $2$-generated.

\begin{prop} \label{4}
Let $f/\g$ and $g/\g$ be elements of $D$, with $f, g \in \R[X]$, $\g \in \G$. Let $M=\gcd(f,g)$, say $f=M f'$, $g=M g'$. Then the ideal $\left(f/\g,g/\g\right)$ is not principal if and only if $s = \sup \{ \deg \,f' , \deg \,g' \}$ is odd.
\end{prop}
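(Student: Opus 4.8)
The plan is to prove the two implications separately. Write $I=(f/\g,g/\g)$ for the ideal in question (we may assume $I\neq 0$). The guiding idea is that, $D$ being Dedekind (Theorem~\ref{Dedekind}), principality of $I$ is detected by its ideal class; and since $I^{2}$ is principal by Proposition~\ref{square} --- explicitly $I^{2}=\big((f/\g)^{2}+(g/\g)^{2}\big)D=\big(M^{2}(f'^{2}+g'^{2})/\g^{2}\big)D$ --- that class is $2$-torsion, and I will show it is trivial exactly when $s$ is even, by bare-hands degree and root bookkeeping rather than by computing $\mathrm{Pic}(D)$. As preliminaries: since $f/\g,g/\g\in D$ one has $\deg M+s=\sup\{\deg f,\deg g\}\le\deg\g$; since $\gcd(f',g')=1$ the polynomial $f'^{2}+g'^{2}$ lies in $\G^{+}$ and has degree exactly $2s$; and I factor $M=M_{0}M_{1}$ with $M_{0}\in\R[X]$ the product of the real linear factors of $M$ and $M_{1}\in\G$ (similarly $p=p_{0}p_{1}$ later).

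Suppose first that $s$ is odd and, for a contradiction, that $I=(h)$; by Proposition~\ref{charac} we may write $h=p/q$ with $p\in\R[X]$, $q\in\G$, $\deg p\le\deg q$. Then $(p^{2}/q^{2})D=I^{2}=\big(M^{2}(f'^{2}+g'^{2})/\g^{2}\big)D$, so these two generators differ by a unit of $D$, which by the description of $D^{*}$ recalled after Proposition~\ref{charac} has the form $\g_{1}/\g_{2}$ with $\g_{1},\g_{2}\in\G$ and $\deg\g_{1}=\deg\g_{2}$. Clearing denominators gives an identity $p^{2}\g_{2}\g^{2}=\g_{1}M^{2}(f'^{2}+g'^{2})q^{2}$ in the UFD $\R[X]$. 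Comparing the number of real roots (with multiplicity) of the two sides --- everything except $p^{2}$ on the left and $M^{2}$ on the right lies in $\G$ --- gives $\deg p_{0}=\deg M_{0}$; comparing total degrees and cancelling $\deg\g_{1}=\deg\g_{2}$ gives $\deg p+\deg\g=\deg M+s+\deg q$. Reducing this last equation modulo $2$, where $\deg\g$, $\deg q$, $\deg M_{1}$, $\deg p_{1}$ are all even, yields $\deg p_{0}\equiv\deg M_{0}+s\pmod 2$, whence $s\equiv 0\pmod 2$, a contradiction.

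Now suppose $s$ is even. Put $k=\tfrac12(\deg\g-\deg M_{1}-s)$, a non-negative integer precisely because $s$ (hence $\deg\g-\deg M_{1}-s$) is even and $\deg\g\ge\deg M_{1}+s$; set $d=M_{0}/(1+X^{2})^{k}$, which lies in $D$ since $\deg M_{0}\le 2k$. I claim $I=(d)$. For $I\subseteq(d)$ one computes $(f/\g)/d=M_{1}f'(1+X^{2})^{k}/\g$ and $(g/\g)/d=M_{1}g'(1+X^{2})^{k}/\g$, whose numerators have degrees $\deg f'+\deg\g-s\le\deg\g$ and $\deg g'+\deg\g-s\le\deg\g$, so both quotients lie in $D$ by Proposition~\ref{charac}. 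For $d\in I$: choose a Bézout identity $f'A_{1}+g'A_{2}=(1+X^{2})^{s}$ in $\R[X]$, arranged so that $\deg A_{1},\deg A_{2}\le s$ by reducing one coefficient modulo whichever of $f',g'$ has degree $s$; then $c_{i}:=A_{i}\g/(M_{1}(1+X^{2})^{k+s})$ lies in $D$ (its numerator has degree $\le\deg\g+s$, equal to the degree of the denominator), and a short simplification gives $c_{1}(f/\g)+c_{2}(g/\g)=M(f'A_{1}+g'A_{2})/(M_{1}(1+X^{2})^{k+s})=M_{0}/(1+X^{2})^{k}=d$.

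The delicate point is the inclusion $d\in I$: the Bézout coefficients coming from $\gcd(f',g')=1$ are a priori of too large degree to give elements of $D$ after division by $\g$, so one must both reduce them modulo the higher-degree partner and multiply through by the correct power of $1+X^{2}$; the exponent $k+s$ is forced, and it is exactly the evenness of $s$ that makes $k$ --- hence $d$, hence the whole computation --- legitimate, which is where parity re-enters on this side. By contrast the "only if" half is the transparent one, being nothing more than the $2$-torsion of $[I]$ made explicit through counting real roots and total degrees.
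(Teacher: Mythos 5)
Your proof is correct; I checked both halves, including the degree control $\deg A_1,\deg A_2\le s$ on the B\'ezout coefficients (reduce modulo the degree-$s$ partner, then the other coefficient automatically has degree exactly $s$) and the inequalities that make $k=\tfrac12(\deg\g-\deg M_1-s)$ and $d=M_0/(1+X^2)^k$ legitimate elements to work with. Your route, however, differs from the paper's. The paper first replaces $(f/\g,g/\g)$ by the fractional ideal $(f',g')$, which differs from it by the factor $M/\g$, so $\g$, $M$ and your decompositions $M=M_0M_1$, $p=p_0p_1$ never appear: its ``$s$ odd'' half is the same parity argument as yours (the square of the ideal is principal by the Dress identity, units of $D$ have degree $0$, and a generator without real zeros or poles must have even degree), but carried out on $(f',g')$ it needs no real-root count, whereas on the unreduced ideal you must match $p_0$ against $M_0$ before reducing degrees modulo $2$. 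The more substantial divergence is in the ``$s$ even'' half: the paper takes an arbitrary $h\in\G$ of degree $s$ and obtains $h\in(f',g')$ directly from $f'^2+g'^2=uh^2$ via $h=u^{-1}(f'/h)f'+u^{-1}(g'/h)g'$, so the Dress relation $(a,b)^2=(a^2+b^2)D$ replaces any polynomial B\'ezout identity; you instead exhibit the explicit generator $d=M_0/(1+X^2)^k$ of the original ideal and prove $d\in I$ through a degree-controlled B\'ezout identity $f'A_1+g'A_2=(1+X^2)^s$. Your version buys a concrete generator of $(f/\g,g/\g)$ itself and makes the role of the parity of $s$ visible on both sides of the equivalence, at the cost of extra bookkeeping; the paper's version is shorter because the sum-of-two-squares identity does the work that your B\'ezout identity does.
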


\begin{proof}
It suffices to prove that the fractional ideal $\left(f',g'\right)$ is exactly $2$-generated as a $D$-module if and only if $s = \sup \{ \deg \,f' , \deg \,g' \}$ is odd. In any case, $f'^2+g'^2 \in \G$ and $\deg(f'^2+g'^2)=2s$, since $f', g'$ are coprime.

We firstly consider the case when $\deg(f'^2+g'^2)=2s$ with $s$ odd. Assume by contradiction that $\left(f',g'\right)=hD$ for some $h\in\R(X)$. It follows that $\left(f',g'\right)^2=(f'^2+g'^2)D=h^2 D$, hence $f'^2+g'^2=uh^2$, where $u \in D^*$. It follows that the rational function $h$ has neither zeros nor poles in $\R$, hence $h = \g/\delta$, with $\g, \delta \in \G$. Therefore $h$ has even degree, say $\deg \, h = 2n$. Then $u= (f'^2+g'^2)/h^2$ shows that $\deg \, u = 2s - 4n \ne 0$. But this is impossible, since $u$ is a unit of $D$, hence $\deg \, u = 0$.

Conversely, let us assume that $s$ is even. If $s=0$, both $f'$ and $g'$ lie in $\R$ and $\left(f',g'\right)=D$. If $s > 0$, take $h \in \G$ of degree $s$, possible since $s$ is even. Then $f'/h$ and $g'/h$ are both elements of $D$, so $f', g' \in hD$, whence $(f', g') \subseteq hD$.

Moreover, let
$$
u=\frac{f'^2+g'^2}{h^2}.
$$
Then the choice of the degrees shows that $\deg \, u = 0$, so $u \in D^*$. From the preceding equality we get
\begin{equation*}
h=\frac{(f'^2+g'^2)}{u h}
= u^{-1}\frac{f'}{h} f'+u^{-1} \frac{g'}{h} g',
\end{equation*}
where $u^{-1} {f'}/{h}, u^{-1}{g'}/{h}\in D$, whence $h \in \left(f',g'\right)$. We have got the reverse inclusion $h D \subseteq (f', g')$.
\end{proof}

\begin{rem} 
One may consider the minimal Dress rings $D_K$ of $K = \R(\mathcal A)$, where $\mathcal A$ is any set of indeterminates. However, the property of being Noetherian is specific of the Dress ring of $\R(X)$. In fact, by Sch\"ulting's result in \cite{Schulting}, generalized by Olberding and Roitman in \cite{OR}, if $\mathcal A$ contains more than one indeterminate, then $D_K$ contains finitely generated ideals that are not $2$-generated. Therefore $D_K$ cannot be Noetherian, otherwise, being a Pr\"ufer domain, it should be Dedekind, hence all its ideals should be generated by two elements. 
\end{rem}

\begin{rem} \label{stable} 
It is worth noting that the minimal Dress ring $D$ of $\R(X)$ is a simple example of a Dedekind domain of square stable range one, but without $1$ in the stable range. We refer to \cite{KLW} for a thorough investigation on these notions. The ring $D$ appears to be easier than their examples, based on Theorem 5 of Swan's paper \cite{Swan_n_gen}.

Take $a, b \in D$ such that $(a, b) = D$. Then, from $D = (a, b)^2 = (a^2 + b^2)D$, it follows that $a^2 + b^2 \in D^*$, and therefore $D$ has square stable range one.

Let us show that $D$ does not have $1$ in the stable range. Take the following elements of $D$: $a = X/\g$, $b = (X^2 - 1)/\g$, where $\g = 1 + X^2$. Then $D = (a, b)$, since $a^2 + b^2$ is a unit of $D$. Let us show that $a + bz \notin D^*$,
for every $z \in D$. Let us assume, for a contradiction, that $a + bz \in D^*$, where $z = f/\delta$, for suitable $f \in \R[X]$ and $\delta \in \G^+$. Being a unit, $a + bz$ has no roots in $\R$. Equivalently, $f_1 = X \delta + (X^2 - 1) f$ has no roots. However, $f_1(1) = \delta(1) > 0$, and $f_1(-1) = - \delta(-1) < 0$, so $f_1$ does have roots. We reached a contradiction.  
\end{rem}

\section{Matrices over $D$ that are products of idempotent matrices}

Proposition \ref{4} and Theorem \ref{Dedekind} show that $D$, the minimal Dress ring of $\R(X)$, is a Dedekind domain which is not a principal ideal domain. Hence we expect that $D$ does not satisfy property {\ID2}, in support of the conjecture recalled in the introduction. This difficult question is left open. Actually, in this section we will focus on properties of the entries of $2 \times 2$ singular matrices over $D$, that guarantee factorization into products of idempotent.

We start with some considerations that hold over an arbitrary integral domain $R$. 

It is easy to prove that a singular nonzero matrix $\begin{pmatrix}
a & b\\
c & d
\end{pmatrix}$ is idempotent if and only if $d = 1 - a$ (cf. \cite{SalZan}). 

Now we remark some useful factorizations into idempotents.

\begin{equation} \label{0pq}
\begin{pmatrix}
0 & q\\
0 & 0
\end{pmatrix}=\begin{pmatrix}
1 & 0\\
0 & 0
\end{pmatrix}\begin{pmatrix}
0 & q\\
0 & 1
\end{pmatrix} \ ; \ \begin{pmatrix}
p & 0\\
0 & 0
\end{pmatrix}=\begin{pmatrix}
1 & -1\\
0 & 0
\end{pmatrix}\begin{pmatrix}
1 & 0\\
1-p & 0
\end{pmatrix}
\end{equation}

\begin{equation} \label{p divides q}
\begin{pmatrix}
p & rp\\
0 & 0
\end{pmatrix}=\begin{pmatrix}
p & 0\\
0 & 0
\end{pmatrix}\begin{pmatrix}
1 & r\\
0 & 0
\end{pmatrix} = \begin{pmatrix}
1 & -1\\
0 & 0
\end{pmatrix}\begin{pmatrix}
1 & 0\\
1-p & 0
\end{pmatrix} \begin{pmatrix}
1 & r\\
0 & 0
\end{pmatrix}
\end{equation}

A pair of elements $p, q$ of $R$ is said to be an {\it idempotent pair} if $(p \ q)$ is the first row of an idempotent matrix. In this case we get the factorization
\begin{equation}\label{pair}
\begin{pmatrix}
p & q\\
0 & 0
\end{pmatrix}=\begin{pmatrix}
1 & 0\\
0 & 0
\end{pmatrix}\begin{pmatrix}
p & q\\
r & 1-p
\end{pmatrix},
\end{equation}
where $rq = p(1 - p)$.

We note that, obviously, every matrix similar to an idempotent matrix is idempotent, hence a singular matrix $\mathbf{S}$ is a product of idempotent matrices if and only if any matrix similar to $\mathbf{S}$ is a product of idempotents.

\begin{lem} \label{scambio}
Let $R$ be a domain, $p, q \in R$. The matrix 
$\mathbf A = \begin{pmatrix}
p & q\\
0 & 0
\end{pmatrix}$
is a product of idempotent matrices if and only if such is
$\mathbf{B} = \begin{pmatrix}
q & p\\
0 & 0
\end{pmatrix}.$ 
\end{lem}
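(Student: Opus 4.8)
The plan is to exploit the observation made just before the lemma: a singular matrix is a product of idempotents if and only if any matrix similar to it is. So I would look for an invertible matrix $U$ with $U \mathbf{A} U^{-1} = \mathbf{B}$, or more flexibly, try to connect $\mathbf{A}$ and $\mathbf{B}$ through a chain of similarities and the elementary factorizations \eqref{0pq}--\eqref{pair}. The natural first guess is the permutation matrix $P = \begin{pmatrix} 0 & 1 \\ 1 & 0 \end{pmatrix}$, but conjugating $\mathbf{A}$ by $P$ gives $\begin{pmatrix} 0 & 0 \\ q & p \end{pmatrix}$, not $\mathbf{B}$; that is the transpose-like form with the nonzero row on the bottom, so a single permutation does not suffice.

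A cleaner route is to use transposition together with the symmetry of the problem under taking transposes. First I would note that $\mathbf{A}^{T} = \begin{pmatrix} p & 0 \\ q & 0 \end{pmatrix}$, and conjugating this by $P$ yields $P \mathbf{A}^T P = \begin{pmatrix} 0 & q \\ 0 & p \end{pmatrix}$. Hmm, that still is not $\mathbf{B}$. The key algebraic fact to bring in is that the transpose of an idempotent is idempotent and the transpose of a product reverses the order of an idempotent product, so $\mathbf{M}$ is a product of idempotents iff $\mathbf{M}^T$ is. Thus $\mathbf{A}$ is a product of idempotents iff $\mathbf{A}^T = \begin{pmatrix} p & 0 \\ q & 0 \end{pmatrix}$ is, and the latter is similar via $P$ to $\begin{pmatrix} 0 & 0 \\ 0 & 0 \end{pmatrix}$-shaped... let me instead conjugate $\mathbf{A}^T$ by $P$: $P \mathbf{A}^T P^{-1} = \begin{pmatrix} 0 & 1 \\ 1 & 0\end{pmatrix}\begin{pmatrix} p & 0 \\ q & 0 \end{pmatrix}\begin{pmatrix} 0 & 1 \\ 1 & 0 \end{pmatrix} = \begin{pmatrix} q & 0 \\ p & 0 \end{pmatrix} \begin{pmatrix} 0 & 1 \\ 1 & 0\end{pmatrix}= \begin{pmatrix} 0 & q \\ 0 & p \end{pmatrix}$, and transposing once more gives $\begin{pmatrix} 0 & 0 \\ q & p \end{pmatrix}$, which is $P \mathbf{B} P^{-1}$ up to... actually $\begin{pmatrix} 0 & 0 \\ q & p\end{pmatrix}$ conjugated by $P$ is exactly $\begin{pmatrix} p & q \\ 0 & 0\end{pmatrix}$, not $\mathbf{B}$. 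So the careful bookkeeping is: $\mathbf{A} \rightsquigarrow \mathbf{A}^T \rightsquigarrow P\mathbf{A}^T P \rightsquigarrow (P\mathbf{A}^T P)^T = \mathbf{B}$, where each $\rightsquigarrow$ preserves "being a product of idempotents" (transpose by the reversal remark, conjugation by the similarity remark). I would verify by direct computation that $(P \mathbf{A}^T P)^T = \mathbf{B}$: indeed $P\mathbf{A}^T P = \begin{pmatrix} 0 & q \\ 0 & p\end{pmatrix}$ has transpose $\begin{pmatrix} 0 & 0 \\ q & p \end{pmatrix}$ — that is still not $\mathbf{B}$, so one more conjugation by $P$ is needed, which is harmless.

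Concretely, I would organize the proof as follows. \textbf{Step 1:} Record that the transpose of an idempotent matrix is idempotent, and that $(\mathbf{M}_1 \cdots \mathbf{M}_k)^T = \mathbf{M}_k^T \cdots \mathbf{M}_1^T$; hence a matrix is a product of idempotents iff its transpose is. \textbf{Step 2:} Record (from the remark in the text) that conjugation by an invertible matrix preserves the property of being a product of idempotents. \textbf{Step 3:} Apply transpose to $\mathbf{A}$, then conjugate by $P = \begin{pmatrix} 0 & 1 \\ 1 & 0 \end{pmatrix}$, then transpose again, then conjugate by $P$ again, tracking the matrix at each stage and checking the final result equals $\mathbf{B}$. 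Since each operation is reversible and preserves the property, $\mathbf{A}$ is a product of idempotents iff $\mathbf{B}$ is. \textbf{Step 4 (alternative, possibly shorter):} Instead of this four-step chain, directly search for a single $U \in GL_2(R)$ with $U \mathbf{A} U^{-1} = \mathbf{B}$; writing $U = \begin{pmatrix} a & b \\ c & d \end{pmatrix}$ and imposing the equation leads to linear relations that one can try to solve, but over a general domain this may fail and the transpose trick is the robust argument.

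The main obstacle I anticipate is the bookkeeping: a naive single permutation does not send $\mathbf{A}$ to $\mathbf{B}$ but to a matrix with its nonzero entries in the second row, so one genuinely needs the interplay between transposition and conjugation, and it is easy to lose track of which of the four "flipped" forms one has landed on. Once the two structural facts (transpose reverses an idempotent product; similarity is harmless) are in hand, the proof is a short finite computation with no domain-specific input — in fact it holds over any ring, and the statement for a domain $R$ is a special case.
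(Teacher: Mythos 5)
Your strategy cannot work as stated: both operations you allow yourself --- transposition and conjugation by an invertible matrix --- preserve the trace, while $\mathrm{tr}\,\mathbf{A}=p$ and $\mathrm{tr}\,\mathbf{B}=q$. Hence, when $p\neq q$, no chain of transposes and similarities can carry $\mathbf{A}$ to $\mathbf{B}$. This is already visible in your own computations: the only matrices reachable from $\mathbf{A}$ by these moves are $\begin{pmatrix} p & q\\ 0 & 0\end{pmatrix}$, $\begin{pmatrix} p & 0\\ q & 0\end{pmatrix}$, $\begin{pmatrix} 0 & 0\\ q & p\end{pmatrix}$ and $\begin{pmatrix} 0 & q\\ 0 & p\end{pmatrix}$, never $\mathbf{B}$. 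In particular the final sentence of your Step 3 is incorrect: conjugating $\bigl(P\mathbf{A}^{T}P\bigr)^{T}=\begin{pmatrix} 0 & 0\\ q & p\end{pmatrix}$ by $P$ brings you back to $\mathbf{A}$, exactly as you had (correctly) observed earlier in the proposal, so the ``one more conjugation'' does not land on $\mathbf{B}$. The same trace obstruction kills your Step 4: there is no $U\in GL_2(R)$ with $U\mathbf{A}U^{-1}=\mathbf{B}$ unless $p=q$.

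The missing idea is that the passage from $\mathbf{A}$ to $\mathbf{B}$ need not be an equivalence at all; it suffices to obtain $\mathbf{B}$ from a matrix already known to be a product of idempotents by multiplying by \emph{one more idempotent factor}, an operation that can change the trace. This is what the paper does: if $\mathbf{A}$ is a product of idempotents, then so is $\mathbf{S}=P\mathbf{A}P=\begin{pmatrix} 0 & 0\\ q & p\end{pmatrix}$ by similarity, and then $\mathbf{B}=\begin{pmatrix} 1 & 1\\ 0 & 0\end{pmatrix}\mathbf{S}$, where the left factor is itself idempotent, so $\mathbf{B}$ is a product of idempotents. The converse is not obtained by inverting these steps but by symmetry, i.e.\ by running the same argument with the roles of $p$ and $q$ interchanged. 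Your Step 1 (transpose of an idempotent is idempotent, and transposition reverses a product) is correct and sometimes useful, but on its own it cannot bridge two matrices with different traces.
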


\begin{proof}
Assume that $\mathbf A$ is a product of idempotents. We get 
$$
 \begin{pmatrix}
0 & 1\\
1 & 0
\end{pmatrix}\mathbf A \begin{pmatrix}
0 & 1\\
1 & 0
\end{pmatrix} = \begin{pmatrix}
0 & 0\\
q & p
\end{pmatrix} = \mathbf S.
$$
Hence $\mathbf S$ is also a product of idempotents. However 
$$
\mathbf{B} = \begin{pmatrix}
1 & 1\\
0 & 0
\end{pmatrix} \mathbf S,
$$
hence $\mathbf{B}$ is a product of idempotents. The argument is reversible.
\end{proof}

Now we get back to $D$, the minimal Dress ring of $\R(X)$. 
We will provide explicit factorizations into idempotent matrices of particular classes of singular matrices of the form $\begin{pmatrix}
p & q\\
0 & 0
\end{pmatrix}$, where the entries $p, q \in D$ satisfy rather natural mutual relations in terms of their roots and degrees. 

The next lemma is crucial.

\begin{lem}\label{grado uguale}
Let $x,y$ be two non-zero polynomials in $\R[X]$ with $\deg\,(x)=\deg\,(y)$. 

\begin{enumerate}[(a)]

\item If $y(u)>0$ (or $y(u)<0$) for every $u$ root of $x$, then there exists $\beta \in \G$ such that $\delta=x^2+ y \beta\in \G^+$, $\deg\, x - 1 \le \deg \b \le \deg \, x = \deg\,\delta/2$.

\item If $x(z)>0$ (or $x(z)<0$) for every $z$ root of $y$, then there exists $\eta \in \G$ such  that $\delta= x\eta+y^2 \in \G^+$ and $\deg \, y - 1 \le \deg\,\eta \le \deg\,(y)=\deg \delta/2$.
\end{enumerate}
\end{lem}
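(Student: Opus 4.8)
The plan is to prove part (a); part (b) will follow by an entirely symmetric argument (swapping the roles of $x$ and $y$, or simply by noting that part (b) is part (a) applied to the same polynomials with $x$ and $y$ interchanged). So I concentrate on constructing $\beta \in \G$ with $\delta = x^2 + y\beta \in \G^+$ and the stated degree bounds, under the hypothesis that $y$ has constant sign, say $y(u) > 0$, at every real root $u$ of $x$ (the case $y(u) < 0$ is handled by replacing $\beta$ with $-\beta$, or $y$ with $-y$, at the end).

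First I would reduce to the case where $x$ has only real roots: if $x = x_0 c$ with $c \in \G$ the product of the irreducible quadratic factors and $x_0$ the product of the linear factors, then $x^2 = x_0^2 c^2$, and it suffices to find $\beta_0 \in \G$ with $x_0^2 + y\beta_0 \in \G^+$ of the right degree and then multiply through by $c^2$; one must check the degree count survives, which it does since $\deg c^2$ is even. So assume $x = a\prod_{i}(X - u_i)$ with the $u_i \in \R$ (not necessarily distinct) and $a \ne 0$. The key idea is interpolation: I want $\beta$ to be a \emph{positive} polynomial (i.e. in $\G^+$) whose values at the roots $u_i$ are prescribed so that $x^2 + y\beta > 0$ there. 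Since $x(u_i) = 0$, at a root $u_i$ the quantity $x^2 + y\beta$ equals $y(u_i)\beta(u_i)$, which is automatically positive once $\beta(u_i) > 0$, because $y(u_i) > 0$ by hypothesis. The real content is to choose such a $\beta$ that is \emph{globally} positive and, simultaneously, makes $x^2 + y\beta$ globally positive and of degree exactly $2\deg x$.

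The natural choice: take $\beta = x^2/y^{?}$-type corrections won't work since we need a polynomial, so instead I would argue as follows. Since $\deg x = \deg y =: n$, the polynomial $x^2$ has degree $2n$ and positive leading coefficient. Pick $\beta = \lambda + (\text{small correction})$ where $\lambda > 0$ is a large real constant and the correction is a sum of squares of low degree designed to push $x^2 + y\beta$ above zero on the finitely many intervals where it might dip negative; away from the roots of $x$ we have $x^2 > 0$, so negativity of $x^2 + y\beta$ can only occur in small neighborhoods of the $u_i$ where $x^2$ is tiny, and there $y\beta \approx y(u_i)\beta(u_i) > 0$ dominates once $\beta > 0$ is bounded below. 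Quantitatively: choosing $\beta$ to be a suitable positive constant $\lambda$ works outright if $\deg(y\lambda) < 2n$, i.e. if $\deg y < 2n$, which holds since $\deg y = n$ and $n \le 2n$ with equality only when $n = 0$ — the case $n=0$ being trivial ($x,y$ nonzero constants, take $\beta = 1$). For $n \ge 1$: set $\beta$ to be a positive constant $\lambda$; then $\delta = x^2 + \lambda y$ has degree $2n$ (leading term from $x^2$), leading coefficient $a^2 > 0$, and $\delta(u_i) = \lambda y(u_i) > 0$ at every real root $u_i$ of $x$; for $\lambda$ small enough $\delta$ stays positive everywhere else too, by a compactness/continuity argument on the complement of small neighborhoods of the $u_i$ (where $x^2$ is bounded below) together with the sign control near each $u_i$. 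Then $\beta = \lambda \in \G^+ \subseteq \G$ has $\deg \beta = 0$, which satisfies $\deg x - 1 \le \deg\beta \le \deg x$ only when $n = 1$; for $n \ge 2$ I must instead take $\beta$ of degree $n$ or $n-1$.

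The fix for $n \ge 2$, and the step I expect to be the main obstacle, is to realize $\beta$ as an honest positive polynomial of degree in $\{n-1, n\}$ rather than a constant. I would build $\beta$ by a Lagrange-type interpolation at the real roots $u_i$ of $x$ together with enough extra interpolation nodes to control the degree and positivity: prescribe $\beta(u_i) = 1$ for each distinct real root $u_i$ (taking care with multiplicities by also prescribing derivatives, or by perturbing $x$ to have simple roots — but the cleanest route is to note $x^2$ already has even multiplicity at each $u_i$, so near $u_i$ we have $\delta(t) = y(u_i)\beta(u_i)(t-u_i)^0 + O(t - u_i)$ and only the \emph{value} $\beta(u_i) > 0$ matters), then choose the remaining coefficients of $\beta$ (there are enough free parameters since $\deg\beta \ge n-1 \ge $ number of distinct roots minus one, using $\deg x = n$) so that $\beta$ is a sum of squares of polynomials — equivalently strictly positive on $\R$ — and so that $\delta = x^2 + y\beta$ is likewise strictly positive with the correct leading behavior. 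Positivity of $\delta$ globally again reduces to the near-the-roots estimate (where $y\beta$ dominates the vanishing $x^2$) plus a scaling of the free part of $\beta$ to keep $\delta$'s leading coefficient positive and dominant; here one uses the degree identity $\deg(x^2) = 2n \ge 2(n-1) \ge \deg(y\beta)$ when $\deg\beta \le n$, with the boundary case $\deg\beta = n$ handled by choosing $\beta$'s leading coefficient small and positive so that the leading coefficient of $\delta = x^2 + y\beta$ is still positive (it is $a^2 + (\text{small})\cdot(\text{lead of }y) $, which one can keep positive). Finally $\deg\delta = 2n = 2\deg x$ as required, and $\delta \in \G^+$ since it is a strictly positive polynomial, while $\beta \in \G$ since it was chosen strictly positive. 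The sign-reversed hypothesis $y(u) < 0$ is accommodated by running the same construction with $-y$ in place of $y$ and setting $\beta := -\beta'$ where $\beta' \in \G$ is produced for $-y$; then $-\beta' \in \G^-$ — so to keep $\beta \in \G$ one instead prescribes $\beta(u_i) = -1$ from the start and checks $y(u_i)\beta(u_i) > 0$ still holds, giving $\delta \in \G^+$ with $\beta$ still a (now negative-valued, hence in $\G^-\subseteq$ not $\G$) — the honest statement is that $\beta \in \G$ means $\beta$ has no real roots, so either sign is allowed, and the construction goes through verbatim with $\beta(u_i) = \operatorname{sign}(y(u_i))$. This completes part (a); part (b) is symmetric.
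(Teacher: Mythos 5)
Your overall strategy---at a real root $u$ of $x$ only the sign of $y\beta$ matters because $x^2$ vanishes there, away from the roots $x^2>0$ should dominate, and the behaviour at infinity is controlled through the leading term---is exactly the strategy of the paper's proof, and your treatment of the degenerate cases ($n=0,1$, constant $\beta$) is essentially fine. But the case that carries the whole content of the lemma, $n\ge 2$, where $\beta$ is forced to have degree $n$ or $n-1$, is not actually proved. Your plan is to interpolate: prescribe $\beta(u_i)=1$ (later $\beta(u_i)=\operatorname{sign}\,y(u_i)$) at the distinct real roots of $x$ and then choose ``the remaining coefficients'' so that $\beta$ and $\delta=x^2+y\beta$ are both strictly positive. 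Two concrete problems. First, the prescription can be incompatible with the degree requirement: if $n\ge 3$ is odd and $x$ has $n$ distinct real roots, then $\beta\in\G$ must have even degree, hence degree $n-1$; but a polynomial of degree $\le n-1$ taking the value $1$ at $n$ points is the constant $1$, of degree $0\ne n-1$, so no admissible $\beta$ of your form exists. Second, and more importantly, pinning the values of $\beta$ at the roots destroys the one mechanism your own sketch relies on for global positivity of $\delta$, namely scaling $\beta$ down until $|y\beta|<x^2$ on the compact region away from the roots; ``scaling of the free part'' is not an argument (with $m$ distinct roots and $\deg\beta\le n$ you may have a single free coefficient), and you give no substitute. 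The paper avoids both problems by never prescribing values: it fixes $\beta'\in\G$ of the correct degree, whose constant sign makes $\beta' y<0$ at the roots of $x$ and whose leading coefficient makes $x^2-\beta' y$ have positive leading coefficient, fixes $[-M,M]$ once and for all from that leading-term condition, and then takes $\beta=-s\beta'$ for one small scalar $s\in(0,1)$ obtained from a finite compactness argument (small intervals around the roots, where $-\beta' y>0$ costs nothing, and the complementary closed intervals, where $x^2$ has a positive minimum). Only the sign and the degree of $\beta$ matter at the roots, so the whole of $\beta$ stays freely scalable.

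Two further slips: the preliminary reduction ``factor out $c\in\G$ from $x$ and multiply the solution by $c^2$'' gives $\beta=\beta_0c^2$ with $\deg\beta=\deg\beta_0+2\deg c$, which violates the bound $\deg\beta\le\deg x=\deg x_0+\deg c$ as soon as $\deg c\ge1$ (and the reduced pair $x_0,y$ no longer has equal degrees), so ``the degree count survives'' is false; and the inequality $2(n-1)\ge\deg(y\beta)$ is wrong, since $\deg(y\beta)=n+\deg\beta\ge 2n-1$. These are repairable, but the missing construction of $\beta$ for $n\ge2$ is a genuine gap.
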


\begin{proof}
We will prove (a), since (b) is analogous, exchanging the roles of $x$ and $y$. We distinguish the cases $x\in\G$ and $x\notin \G$.

\medskip
FIRST CASE: $x\in\G$.

Since $x$ has no roots in $\R$, then $x^2$ is always positive. Now take $ \beta' \in \G$ of degree equal to either $\deg \, x$ or $\deg \, x - 1$, in accordance with the parity of the degree of $x$, and such that the leading coefficient of $x^2 - \b' y$ is positive. Then 
$\lim_{X \to \pm \infty}(x^2 - \b' y) = + \infty$, hence there exists $M > 0$ such that $x^2 - \b'y > 0$ outside the interval $I = [-M, M]$. Let $k > 0$ be the minimum of $x^2$ and $h > 0$ the maximum of $|\b' y|$ in the interval $I$, and pick an integer $m \ge 2$ such that $k/mh < 1$. Then, inside the interval $I$, we get $|\b' y|/mh < 1$, and
$$
x^2 \ge k > k |\b' y|/mh.
$$
Therefore, $x^2 - k \b' y/mh > 0$ inside the interval $I$. Moreover, outside the interval $I$, we have $x^2 - k \b'y/mh > 0$, as well. This is trivial for the points where $k \b'y/mh < 0$, and, for the points where $k \b'y/mh > 0$, we get $x^2 - k\b'y/mh > x^2 - \b'y > 0$,
since $k/mh < 1$. So the polynomial $\b = - \b' k/mh$ satisfies our requirements.

\medskip
SECOND CASE: $x\notin\G$.

Let $u_1< u_2 < \dots < u_n$ be the distinct roots of $x$. Now we pick $\b'  \in \G$ of degree equal to either $\deg \, x$ or $\deg \, x - 1$, in accordance with the parity of the degree of $x$, and such that the leading coefficient of $x^2 - \b' y$ is positive, and $\b' y(u_i) < 0$ for $1 \le i \le n$. As in the First Case, we may take an interval $I = [- M, M]$ such that $x^2 - \b' y > 0$ outside $I$; under the present circumstances, we also choose $M > 0$ such that $- M < u_1$, $u_n < M$. For $1 \le i \le n$, since $\b' y(u_i) < 0$, we may choose disjoint open intervals $I_i$ containing $u_i$, such that $\b' y < 0$ in $I_i$. We also require that $-M <  \inf  I_1$ and $\sup  I_n < M$.  For $1 \le i < n$, let $J_i$ be the closed interval whose end points are $\sup \, I_i$ and $\inf  I_{i+1}$; we also define $J_0 = [-M, \inf  I_1]$, and $J_n = [\sup I_n, M]$. 

Now, for $0 \le i \le n$, let $k_i$ be the minimum of $x^2$ and $h_i$ the maximum of $|\b' y|$ in $J_i$. So, arguing as in the First Case, we get
$$
x^2 - k_i \b' y/mh_i > 0
$$
inside the interval $J_i$, where $m > 0$ is an integer enough large to work for all $i$. We conclude that there exists a real number $s \in ]0, 1[$ such that $x^2 - s \b' y > 0$ in $\bigcup_i J_i$. However note that $x^2 - s \b' y > 0$ in $\bigcup_i I_i$, since $\b' y < 0$ in each $I_i$. We conclude that $x^2 - s \b' y > 0$ in $I = [-M, M]$, and, arguing as in the First Case, it is positive also outside $I$. We conclude that the polynomial $\b = - s \b'$ satisfies our requirements.  
\end{proof}

\begin{thm}\label{fattorizzazione}
Let $p$ and $q$ be two elements of $D$. Then the matrix $\begin{pmatrix}
p & q\\
0 & 0
\end{pmatrix}$ is a product of idempotent matrices if one of the following holds:
\begin{enumerate}[(i)]
\item $\deg\,p \ge \deg\,q$ and $q(u)>0$ (or $q(u)< 0$) for every $u$ root of $p$

\item $\deg\,q \ge \deg\,p$ and $p(z)>0$ (or $p(z)< 0$) for every $z$ root of $q$.
\end{enumerate}
\end{thm}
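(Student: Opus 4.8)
The plan is to reduce — by means of conjugations (which preserve being a product of idempotent matrices), Lemma \ref{scambio}, and the elementary identities \eqref{0pq}--\eqref{pair} — to the situation of an idempotent pair. \emph{Reductions.} By Lemma \ref{scambio} applied to $(p,q)$, case (ii) becomes case (i), so assume (i). Conjugating by $\begin{pmatrix}1&0\\0&-1\end{pmatrix}\in GL_2(D)$ replaces $q$ by $-q$, so we may assume $q(u)>0$ for every root $u$ of $p$. If $p=0$ or $q=0$ we conclude by \eqref{0pq}; assume $p,q\neq 0$. Finally, by the remark after Proposition \ref{charac} and conjugating by diagonal matrices with entries in $D^*$ (replacing $q$ by any associate $vq$, and using Lemma \ref{scambio} to do the same to $p$), we may assume $p=f/\g$ and $q=g/\d$ with $f,g\in\R[X]$ having only real roots and $\g,\d\in\G^+$; none of this affects the hypotheses $\deg p\ge\deg q$ and $q(u)>0$ at the roots of $p$.

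\emph{Main step.} Let $F\in\R[X]$ be the monic polynomial whose roots, with multiplicity, are the real roots of $p$. Since conjugation by a diagonal matrix over $D^*$ replaces $q$ by $vq$ ($v\in D^*$) without affecting whether the matrix is a product of idempotents, I would choose $v$ so that $q':=vq$ satisfies $q'\equiv 1\pmod F$ as a rational function — i.e.\ $1-q'$ vanishes at each real root $u$ of $p$ to order at least its multiplicity. The only obstruction is that $q^{-1}\bmod F$ must be represented by an element of $\G$; but $q(u)>0$ at every root of $p$ makes this residue \emph{totally positive} (positive value at every root of $F$), and producing an element of $\G$ with a prescribed totally positive residue modulo a given polynomial — with enough control on degrees to keep $v$ in $D^*$ and $\deg q'=\deg q$ — is exactly what the technical Lemma \ref{grado uguale} delivers. (The equal-degree hypothesis of that lemma is arranged by padding with factors from $\G^+$, together, if parity requires it, with a preliminary division step $q\mapsto q+cp$, $c\in D$, which leaves the values of $q$ at the roots of $p$ unchanged.)

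\emph{Conclusion.} Put $q'=vq$ and $r:=q'(1-q')/p$. By Proposition \ref{charac}, $r\in D$: the only possible real poles of $r$ sit at the roots of $p$, where, by the main step, $1-q'$ vanishes to the required order while $q'$ does not vanish; and $\deg r=\deg q'+\deg(1-q')-\deg p\le 0$ since $\deg q'=\deg q\le\deg p$ and $\deg(1-q')\le 0$. Hence $rp=q'(1-q')$, so $(q',p)$ is an idempotent pair and \eqref{pair} writes $\begin{pmatrix}q'&p\\0&0\end{pmatrix}$ as a product of two idempotent matrices. By Lemma \ref{scambio}, $\begin{pmatrix}p&q'\\0&0\end{pmatrix}$ is a product of idempotents, and undoing the conjugations of the first step gives the same for $\begin{pmatrix}p&q\\0&0\end{pmatrix}$.

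\emph{Where the difficulty lies.} Everything outside the main step is routine bookkeeping with Lemma \ref{scambio} and the factorizations \eqref{0pq}--\eqref{pair}. The real work is in the main step: controlling the elements of $\G$ modulo an arbitrary polynomial, and doing so with balanced degrees, so that the resulting rational functions lie in $D$ and, crucially, so that the idempotent-pair witness $r$ has $\deg r\le 0$ — which is the precise point where the hypothesis $\deg p\ge\deg q$ enters. This is what Lemma \ref{grado uguale}, in its two symmetric forms, is built to supply.
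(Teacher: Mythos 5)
Your overall architecture is sound and genuinely different from the paper's: you try to normalize $q$ by a \emph{unit} so that $(q',p)$ becomes an idempotent pair with witness $r=q'(1-q')/p\in D$, and the bookkeeping around this (the conjugations, the swap via Lemma \ref{scambio}, the degree estimate $\deg r\le \deg q-\deg p\le 0$, the pole cancellation at the real roots of $p$) is all correct. The gap is exactly at the point you yourself flag as ``the real work'': the existence of a unit $v\in D^*$ with $vq\equiv 1\pmod F$ is \emph{not} what Lemma \ref{grado uguale} delivers, and you give no other argument for it. The lemma, applied to a pair $(x,y)$, produces $\b\in\G$ with $\delta=x^2+y\b\in\G^+$ and $\deg\b\le\deg x<\deg\delta=2\deg x$; the multiplier it hands you is $\b/\delta$, an element of $D$ of \emph{strictly negative} degree, hence a non-unit, and the residue class of $x^2+y\b$ is not prescribed — only its positivity. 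Trying to convert this into a unit by padding with factors from $\G^+$ runs into a rigid degree count: if you apply the lemma to $x=F\g_0$, $y\g_1'$ with $\g_0,\g_1'\in\G^+$, the requirements ``equal degrees'' and ``$\g\g_1'\b/\delta\in D^*$'' force $\deg q\in\{0,-1\}$, and neither replacing $q$ by an associate (degree unchanged) nor the elementary move $q\mapsto q+cp$ (which only pushes $\deg q$ up to $\deg p\le 0$) rescues the general case. So the pivotal claim needs its own proof; it is in fact true (e.g.\ take $\g_2=(1+X^2)^{\deg F+k}$, let $c'$ be a polynomial representative of $q^{-1}\g_2 \bmod F$ — positive at each root of $F$ by your total-positivity remark — and set $\g_1=c'+tF^2(1+X^2)^k$ with $t\gg 0$, which lies in $\G^+$ by an interval argument in the spirit of the lemma's proof and has $\deg\g_1=\deg\g_2$), but as written your main step is unsupported.

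For comparison, the paper avoids ever needing a unit multiplier: after reducing to $\deg p=\deg q$ via \eqref{similar} and to a common denominator $\g$ with $\deg\g\le\deg x+1$, it applies Lemma \ref{grado uguale} to the numerators $x,y$ to get $\delta=x^2+y\b\in\G^+$ with degrees matched so that $x^2/\delta,\,xy/\delta,\,x\b/\delta,\,y\b/\delta\in D$, forms the idempotent $\mathbf T$ whose two rows are the multiples $(x/\delta)(x\ y)$ and $(\b/\delta)(x\ y)$, and then peels off a single-row factor of type \eqref{0pq}; the non-unit coefficient $\b/\delta$ is absorbed there harmlessly. If you add a proof of the ``unit with prescribed residue mod $F$'' statement along the lines sketched above, your route becomes a complete alternative proof; without it, the argument does not close.
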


\begin{proof}
Recall that, by Lemma \ref{scambio}, $\begin{pmatrix} 
p & q\\
0 & 0
\end{pmatrix}$ is a product of idempotents if and only if such is $\begin{pmatrix}
q & p\\
0 & 0
\end{pmatrix}$. Hence we may safely assume that (i) holds. Moreover, note that
\begin{equation} \label{similar}
\begin{pmatrix}
1 & - 1\\
0 & 1
\end{pmatrix}
\begin{pmatrix}
p & q\\
0 & 0
\end{pmatrix}
\begin{pmatrix}
1 & 1\\
0 & 1
\end{pmatrix} =
\begin{pmatrix}
p & p + q\\
0 & 0
\end{pmatrix}.
\end{equation}
Hence, in case $\deg(p) > \deg(q)$, it suffices to prove that $\begin{pmatrix}
p & p + q\\
0 & 0
\end{pmatrix}$, is a product of idempotents. Note also that $p + q$ keeps the same sign on every root $u$ of $p$, so (i) holds replacing $q$ with $p + q$. In conclusion, we may assume that $\deg \, p = \deg \, q$. Say $p = x/\g$, $q = y/\g$, with $x, y \in \R [X]$, $\g \in \G$. 

As a further reduction, we may assume that $\deg \, \g \le \deg \,x + 1$. Otherwise, take $\tau \in \G$ such that $\deg \, x \le \deg \, \tau \le \deg \, x + 1$. Then $\tau/\g \in D$ and
$$
\begin{pmatrix}
x/\g & y/\g\\
0 & 0
\end{pmatrix} =
\begin{pmatrix}
\tau/\g & 0\\
0 & 0
\end{pmatrix}
\begin{pmatrix}
x/\tau & y/\tau\\
0 & 0
\end{pmatrix}.
$$
Hence, by (\ref{0pq}),  the first member of the above equality is a product of idempotents if such is the second factor of the second member.

After the preceding reductions, we have got in the position to apply Lemma \ref{grado uguale} to $x, y$. We find $\b \in \G$ such that
$
\delta = x^2 + y \b \in \G^+
$
where $\deg \, \b = \deg \, x$ when $\deg \, x$ is even, and $\deg \, \b = \deg \, x - 1$ when $\deg \, x$ is odd. 

Then, if $\deg \, x $ is even, we get 
$
\deg \, \delta = \deg(x^2) =  \deg \, \g + \deg \, \b,   
$
while, if $\deg \, x$ is odd, we get
$
\deg \, \delta = \deg(x^2) = \deg \, x + 1 + \deg \, \b = \deg \, \g + \deg \, \b,
$
hence, in both cases, $\delta/\g \b \in D^*$ and $x^2/\delta, yx/\delta, x\beta/\delta, y\beta/\delta \in D$.

Since $1 - x^2/\g = y \b/\g$, $\mathbf{T}=\begin{pmatrix}
x^2/\delta & yx/\delta\\
x\beta/\delta & y\beta/\delta
\end{pmatrix}$ is an idempotent matrix over $D$. Hence, by (\ref{0pq}) and the factorization 
\[\begin{pmatrix}
x/\g & y/\g \\
0 & 0
\end{pmatrix}=\begin{pmatrix}
\delta/\g\eta & 0 \\
0 & 0
\end{pmatrix}\mathbf{T},
\]we conclude that $\begin{pmatrix}
p & q\\
0 & 0
\end{pmatrix}$ is a product of idempotent matrices over $D$.
\end{proof}

\begin{cor}
In the above notation, let $p=x/\g$ and $q=y/\g$ be two elements of $D$. If both $\deg\,x$ and $\deg\,y$ are $\le 1$, then $\begin{pmatrix}
p & q\\
0 & 0
\end{pmatrix}$ is a product of idempotent matrices.
\end{cor}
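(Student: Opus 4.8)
The plan is to deduce the statement from Theorem \ref{fattorizzazione}, using the elementary factorizations (\ref{0pq}) and (\ref{p divides q}) to cover a couple of degenerate configurations. First I would dispose of the cases $y = 0$ and $x = 0$: if $y = 0$ the matrix is $\begin{pmatrix} p & 0 \\ 0 & 0 \end{pmatrix}$, which is a product of idempotents by (\ref{0pq}), and if $x = 0$ it is $\begin{pmatrix} 0 & q \\ 0 & 0 \end{pmatrix}$, again covered by (\ref{0pq}). So from now on I may assume $x \ne 0$ and $y \ne 0$, and, relabeling $x$ and $y$ by means of Lemma \ref{scambio} if necessary, that $\deg x \ge \deg y$; since $p = x/\g$ and $q = y/\g$ share the denominator $\g$, this is equivalent to $\deg p \ge \deg q$.

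The crucial remark is that, since $\g \in \G$ has no real roots, the real roots of $p = x/\g$ coincide with the real roots of $x$; and $\deg x \le 1$ forces $x$ to have at most one real root. If $x$ has no real root, then the hypothesis ``$q(u) > 0$ (or $q(u) < 0$) for every $u$ root of $p$'' of Theorem \ref{fattorizzazione}(i) is vacuously satisfied, and the conclusion follows at once. If $x$ has a single real root $u$ and $q(u) \ne 0$, the same hypothesis holds trivially — there is only one root to inspect, so ``constant sign'' is automatic — and Theorem \ref{fattorizzazione}(i) applies again.

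It remains to treat the case in which $x$ has a real root $u$ with $q(u) = 0$; since $\g(u) \ne 0$, this means $u$ is a common real root of $x$ and $y$. As $y \ne 0$ and $\deg y \le 1$, the polynomial $y$ must be linear, and then so must $x$, since it has a real root; two linear polynomials sharing a root are nonzero real scalar multiples of one another, so $x = c\,y$ for some $c \in \R \setminus \{0\}$, whence $p = c\,q$. Consequently
$$
\begin{pmatrix} p & q \\ 0 & 0 \end{pmatrix} = \begin{pmatrix} c\,q & (1/c)(c\,q) \\ 0 & 0 \end{pmatrix},
$$
which is a product of idempotent matrices by (\ref{p divides q}), since $c\,q \in D$ and $1/c \in \R \subset D$. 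This exhausts all possibilities. I do not foresee a genuine obstacle here: the corollary is essentially a bookkeeping consequence of Theorem \ref{fattorizzazione}, the only slightly delicate point being to notice that, in the degree-at-most-one regime, the sign hypothesis of that theorem can fail only when $x$ and $y$ share their unique root, and that this is precisely the configuration handled by (\ref{p divides q}).
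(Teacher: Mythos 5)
Your proposal is correct and follows essentially the same route as the paper: when $x$ and $y$ have no common real root, Theorem \ref{fattorizzazione} applies because the sign hypothesis is automatic in degree at most one, and when they do share a root they are proportional, so (\ref{p divides q}) finishes the argument. Your extra bookkeeping (the zero cases and the use of Lemma \ref{scambio}) is harmless and only makes explicit what the paper leaves implicit.
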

\begin{proof}
We may assume $x \ne 0 \ne y$ are nonzero. If $x$ and $y$ have no common roots, then our statement follows from Theorem \ref{fattorizzazione}, since either (i) or (ii) trivially holds when the degrees are $\le 1$. Otherwise, we get $y=rx$, for some $r\in \R$, hence (\ref{p divides q}) yields the desired conclusion.
\end{proof}

It is worth showing that, in case of ``small degrees of the numerators'', we get factorizations into idempotents even when $p$, $q$ have common roots. 

\begin{prop}
In the above notation, let $p = x/\g, q = y/\g \in D$ with $\deg\,x=\deg\,y=2$. If $M=\gcd(x,y)\notin\R$, then $\mathbf{A} =\begin{pmatrix}
p & q\\
0 & 0
\end{pmatrix}$ is a product of idempotent matrices.
\end{prop}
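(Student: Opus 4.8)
The plan is to realise $(p,q)$ --- after a harmless change of coordinates --- as an \emph{idempotent pair}, so that the factorisation~(\ref{pair}) exhibits $\mathbf A$ as a product of two idempotents. Since $M\mid x$ and $M\notin\R$, we have $\deg M\in\{1,2\}$. First I would dispose of the case $\deg M=2$: then $x$ and $y$ are associate in $\R[X]$, hence $q=rp$ for some $r\in\R$, $r\neq0$, and~(\ref{p divides q}) applies directly. From now on $\deg M=1$; write $M=X-c$, $x=Mx'$, $y=My'$ with $x',y'$ linear and coprime (hence with distinct roots), and let $a$ be the root of $x'$.

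The second step is a reduction on the denominator. Since $\deg x=\deg y=2$, for any $\tau\in\G$ of degree $2$ one has $x/\tau,y/\tau\in D$ and $\tau/\gamma\in D$, so $\begin{pmatrix}p&q\\0&0\end{pmatrix}=\begin{pmatrix}\tau/\gamma&0\\0&0\end{pmatrix}\begin{pmatrix}x/\tau&y/\tau\\0&0\end{pmatrix}$, and by~(\ref{0pq}) it suffices to treat the second factor. I would take $\tau=\tfrac{\ell}{2}\bigl((X-c)^2+1\bigr)$, where $\ell$ is the leading coefficient of $x$; then (renaming $\tau$ as $\gamma$) the polynomial $\gamma-x$ has value $\ell/2$ at $c$ and leading coefficient $-\ell/2$, so it changes sign on $\R$ and has a real root $\sigma$. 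Moreover $\sigma\neq a$, since $(\gamma-x)(a)=\gamma(a)\neq0$ ($a$ being a root of $x'\mid x$ while $\gamma\in\G$ has no real root). Thus we may assume $\deg\gamma=2$ and that $\gamma-x$ has a real root $\sigma\neq a$.

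The core step is to turn $(p,q)$ into an idempotent pair. Conjugating by $S=\begin{pmatrix}1&t\\0&1\end{pmatrix}\in GL_2(D)$ replaces $q$ by $q'=q+tp=M(y'+tx')/\gamma$ and, being a similarity, preserves the property of being a product of idempotents. I would choose $t=-y'(\sigma)/x'(\sigma)\in\R$ (legitimate as $x'(\sigma)\neq0$), so that $\sigma$ is a root of $y'+tx'$; since $y'+tx'$ is a nonzero polynomial of degree $\le1$ (because $x',y'$ are coprime), this forces $y'+tx'=c_1(X-\sigma)$ with $c_1\neq0$. Writing $\gamma-x=(X-\sigma)w$ with $\deg w\le1$, the element $r:=x'w/(c_1\gamma)$ lies in $D$, since $\deg(x'w)\le2=\deg\gamma$ and $\gamma\in\G$; and a short computation yields $r\,q'=x(\gamma-x)/\gamma^{2}=p(1-p)$. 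As $\gamma-x\neq0$ we have $1-p\neq0$, so $\mathbf T=\begin{pmatrix}p&q'\\r&1-p\end{pmatrix}$ is a nonzero singular matrix over $D$ whose $(2,2)$ entry is $1$ minus its $(1,1)$ entry, hence idempotent; then $\begin{pmatrix}p&q'\\0&0\end{pmatrix}=\begin{pmatrix}1&0\\0&0\end{pmatrix}\mathbf T$ is a product of two idempotents, and so is $\mathbf A$.

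The main obstacle I anticipate is exactly this denominator reduction. For the original $\gamma$ the polynomial $\gamma-x$ need not have a real root --- for instance when $\gamma-x\in\G$, so that $1-p$ is a unit of $D$ --- and then for \emph{every} admissible choice of $t$ the natural candidate $r=p(1-p)/q'$ has a numerator of degree $3$ and does not lie in $D$; passing to an $x$-adapted $\tau$ of strictly smaller leading coefficient is what forces $\gamma$ and $x$ to share a real value and makes $\deg r\le0$. It is also worth checking the degenerate configurations ($a=c$, i.e.\ $x=\ell M^{2}$, and $\sigma$ equal to the root of $y'$, which gives $t=0$ and $q'=q$): both go through without trouble.
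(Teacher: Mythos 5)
Your proof is correct and follows essentially the same route as the paper's: dispose of $\deg M=2$ via (\ref{p divides q}), then for $\deg M=1$ pass to a degree-two denominator adapted to $x$, apply a shear similarity with a real parameter, realize the resulting first row as an idempotent pair as in (\ref{pair}), and absorb the change of denominator by (\ref{0pq}). The only difference is cosmetic: the paper chooses $\delta$ so that $\delta-x$ has degree one and shears the second entry to $sX/\delta$, whereas you choose $\tau$ so that $\tau-x$ has a real root $\sigma$ and shear the second entry to $c_1M(X-\sigma)/\tau$; both devices serve the identical purpose of making the complementary entry $p(1-p)/q'$ a quotient of a degree-two polynomial by an element of $\G$, hence an element of $D$.
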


\begin{proof}
If $\deg\,M=2$, the factorization follows from (\ref{p divides q}).

If $\deg\,M=1$, we may assume, up to a linear change of coordinates, that $M=X$. Take $r \in \R$ such that $x + r y = sX$, where $0 \ne s \in \R$. Take $\d \in \G$ such that $\d - x$ has degree $1$, and consider the polynomial $z = s^{-1}(\d - x) x/X \in \R[X]$. Then the matrix $ \begin{pmatrix}
x/\d & sX/\d\\
z/\d & 1 - x/\d
\end{pmatrix}$ is idempotent, hence $\mathbf{B} = \begin{pmatrix}
x/\d & sX/\d\\
0 & 0
\end{pmatrix}$ is a product of idempotents. However, analogously to (\ref{similar}), we may verify that $\mathbf{A}$ is similar to $\mathbf{B} =  \begin{pmatrix}
x/\d & y/\d\\
0 & 0
\end{pmatrix} $. Finally, since $\deg \, \d = 2$, the equality
$
\mathbf{A} =  \begin{pmatrix}
\d/\g & 0\\
0 & 0
\end{pmatrix} \mathbf{B} 
$
shows that $\mathbf{A}$ is a product of idempotents (cf., the proof of Theorem \ref{fattorizzazione}).
\end{proof}


\bibliographystyle{plain}

\end{document}